\documentclass[draft]{article}

% packages
\usepackage[leqno,fleqn]{amsmath}
\usepackage{amsthm}
\usepackage{amsbsy}
\usepackage{comment}
\usepackage{enumerate}
\usepackage[normalem]{ulem}
%\usepackage{microtype}
%\usepackage[multiple]{footmisc}

% tikz
\usepackage{tikz}
\usetikzlibrary{positioning,patterns,calc}

% math alphabets
\usepackage{amsfonts}
\usepackage{mathrsfs}
\usepackage{MnSymbol}
\DeclareMathAlphabet{\mathpzc}{OT1}{pzc}{m}{it}

%%%% commands %%%%
\newcommand{\duline}[1]{{\bgroup \markoverwith{{\bgroup \markoverwith{\rule[-1.2pt]{0.1pt}{0.4pt}}\ULon {\rule[-2.8pt]{1pt}{0.4pt}}}}\ULon {#1}}} % define to insert two lines approprietly under M
\newcommand{\dulineF}[1]{{\bgroup \markoverwith{{\bgroup \markoverwith{\rule[-1.2pt]{0.4pt}{0.4pt}}\ULon {\rule[-2.8pt]{2pt}{0.4pt}}}}\ULon {#1}}} % define to insert two lines approprietly under F
\newcommand{\dulineeta}[1]{{\bgroup \markoverwith{{\bgroup \markoverwith{\rule[0.2pt]{0.1pt}{0.4pt}}\ULon {\rule[-1.4pt]{1pt}{0.4pt}}}}\ULon {#1}}} % define to insert two lines approprietly under M
%%%% %%%%

%%%% notations %%%%
\newcommand{\real}{\mathbb{R}} % real numbers
\newcommand{\integer}{\mathbb{Z}} % integers
\newcommand{\naturals}{\mathbb{N}} % naturals
 % hausdorff measure
\newcommand{\supp}{\textup{supp}} % support
\newcommand{\rest}[1]{\,\rule[-.17cm]{.012cm}{.3cm}_{\,#1}} % restriction of a function or diff at a point (a vertical line slightly displaced downward)
 % lipschitz constant
\renewcommand{\setminus}{\,\backslash\,}
\newcommand{\Index}{\text{index}}
\newcommand{\ricci}{\text{Ric}}
\newcommand{\cardinality}{\#}
\newcommand{\weakbound}[1]{\sup\mathcal{I}(#1)+\cardinality\mathcal{A}(#1)}
\newcommand{\strongbound}[1]{\sup\mathcal{I}(#1)+\sup\mathcal{A}(#1)}

% non stantard notations
 %volume of euclidian unit ball
 % nullity of a minimal surface
\newcommand{\area}{\textup{area}} % area/volume of a submanifold
 % the proximity map
 % fineness of a discrete map
\newcommand{\lwidth}{\textbf{L}} % the width of a sequence of discrete maps
\newcommand{\critical}{\textbf{C}} % critical set of a sequence of discrete maps

%% varifolds and currents
 % restriction of a current, varifold or measure to a subset
%%% metrics 
\newcommand{\mass}{\textbf{\duline{M}}} % mass of a current
\newcommand{\fmetric}{\textbf{\dulineF{F}}} % F-metric for currents and varifolds
\newcommand{\flatnorm}{\mathcal{F}} % flat norm for currents
 % any of the above metrics
%%% sets of varifolds
\newcommand{\varifolds}{\mathcal{V}} % set of varifolds for manifolds (and R^n I think)
\newcommand{\rectifiable}{\textup{R}\mathcal{V}} % rectifiable varifolds for manifolds
\newcommand{\integral}{\textup{I}\mathcal{V}} % integral varifolds for manifolds
%%% associated varifolds
 % varifold associated to a rectifiable set
%%% sets of currents/chains
\newcommand{\cycles}{\mathcal{Z}} % closed currents
\newcommand{\currents}{\mathcal{I}} % currents
\newcommand{\ncycles}{\mathcal{Z}_n(M;\integer_2)} % n-cycles mod 2 in M
\newcommand{\ncyclestop}[1]{\mathcal{Z}_n(M;#1;\integer_2)} % closed n-cycles mod 2 in M with some topology
%%% associated currents
 % current associated to a rectifiable set, integer density and tangent k-vector
%%% almost minimising varifolds
\newcommand{\eminimising}{\mathfrak{A}}

%% width and min-max
\newcommand{\width}{\omega} % width
\newcommand{\mwidth}[1]{\width^{(#1)}}
\newcommand{\sweepout}{\mathcal{P}} % set of weepouts

\newcommand{\minimal}{\Lambda}
\newcommand{\minimalcycles}{\mathcal{T}}

%% category
\newcommand{\catone}{\mathcal{N}_1\textup{-cat}}
\newcommand{\Acat}{\mathcal{A}\textup{-cat}}

\newcommand{\lusternik}{\mathcal{LS}}

%%%% %%%%

%%%% theorem styles %%%%
\theoremstyle{plain}
\newtheorem{theorem}{Theorem}[section]
\newtheorem{proposition}[theorem]{Proposition}
\newtheorem{lemma}[theorem]{Lemma}
\newtheorem{corollary}[theorem]{Corollary}
\newtheorem{claim}{\texttt{Claim}}
\makeatletter
\@addtoreset{claim}{theorem}
\makeatother

%%%% definitions and examples %%%%
\theoremstyle{definition}
\newtheorem{definition}[theorem]{Definition}

%%%% remarks %%%%
\theoremstyle{remark}
\newtheorem{remark}{\underline{Remark}}
\numberwithin{equation}{theorem}

\begin{document}

%title
\title{Non-compactness of the space of minimal hypersurfaces}
\author{Nicolau Sarquis Aiex
%\renewcommand{\thefootnote}{\arabic{footnote}}
%\footnote{Department of Mathematics, Imperial College London, $180$ Queen’s Gate, Huxley Building, SW7 2AZ, London, United Kingdom - \texttt{n.sarquis12@imperial.ac.uk}}
%$^{,}$
\footnote{The author was supported by a CNPq-Brasil Scholarship}
}
%\date{}
%\thanks{The author was supported by a CNPq-Brasil Scholarship}
\maketitle

% abstract
\renewcommand{\abstractname}{\vspace{-\baselineskip}}
\begin{abstract}
\noindent \textsc{Abstract.} We show that the space of min-max minimal hypersurfaces is non-compact when the manifold has an analytic metric of positive Ricci curvature and dimension $3\leq n+1\leq 7$. Furthermore, we show that bumpy metrics with positive Ricci curvature admit minimal hypersurfaces with unbounded $\textup{index}+\textup{area}$. When combined with the recent work fo F.C. Marques and A. Neves, we then deduce some new properties regarding the infinitely many minimal hypersurfaces they found.
\end{abstract}

%\noindent \textbf{Mathematics Subject Classification} $49$Q$05$, $53$A$10$.

% introduction
\section{Introduction}

In \cite{lusternik-schnirelmann}, L. Lusternik and L. Schnirelmann defined the category of a manifold $M$ as the least number of contractible open sets that cover it, which we denote by $\lusternik(M)$.
They also showed that any smooth function defined on $M$ has at least $\lusternik(M)$ critical points (see for example \cite{ocorneaetal1}).
Their proof consists in finding a non-decreasing sequence of critical values $c_1\leq\ldots\leq c_{\lusternik(M)}$ using a standard min-max approach.
Then, it divides in two cases: either the sequence is strictly increasing or $c_k=c_{k+i}$ for some $k\in\{1,\ldots,\lusternik(M) \}$ and $i\in\naturals$.
In the first case the proof is finished and in the latter they show that the category of the critical level set has to be greater or equal to $i+1$, hence it cannot be a finite set of points.

In this paper we are interested in using their ideas to prove the same result of the second case on the weak setting of the mass functional defined on the space of $n$-cycles mod $2$ on a $(n+1)$-dimensional manifold.
The space of flat cycles does not have a smooth structure and the mass functional is not even continuous on the flat topology so it is not possible to carry over the methods directly.
To be more precise, we study the $p$-width $\{\width_p\}_{p\in\naturals}$ and show that whenever the Riemannian metric has positive curvature and is analytic, then $\width_p=\width_{p+N}$ implies that the level set of minimal hypersurfaces of area $\width_p$ has category greater or equal to $N+1$.

The ideas if Lusternik and Schnirelmann were already successfully applied in this scenario by F.C. Marques and A. Neves \cite{fmarques-aneves2} in which they show the existence of infinitely many minimal hypersurfaces on manifolds of positive Ricci curvature.
Together with their outstanding result and a compactness theorem by B. Sharp \cite{bsharp1} we are able to further show that the space of minimal hypersurfaces is non-compact when the Riemannian metric is analytic as well as having positive Ricci curvature.
Similarly to the work of Marques-Neves, we make use of the Gromov-Guth \cite{lguth1} growth estimates for the width.
As a consequence we show the existence of infinitely many non-congruent minimal embeddings on analytic perturbations of the round sphere $S^n$, with positive Ricci curvature, for $4\leq n\leq 7$.

This work is divided as follows. 
In section 2 we establish notation and cover some preliminaries to make it sufficiently self-contained. 
All of the results and definitions in this section are taken from \cite{fmarques-aneves2}.
In section 3 we introduce the concept of $\mathcal{A}$-category and we prove the topological theorem about the critical set under the equality case.
In section 4 we apply the result of the previous section to some specific cases and we prove the main non-compactness result.

%acknowledgements
\hfill

\textit{
Acknowledgements: I am thankful to my PhD adviser Andr\'e Neves for his guidance and suggestion to work on this problem. I would like to thank the comments of Alessandro Carlotto and Fernando Cod\'a Marques as well as Ben Sharp for helpful discussions and several corrections.
}

% preliminaries
\section{Preliminaries}
Throughout this section we assume that $(M,g)$ is a Riemannian manifold of dimension $n+1$ ismetrically embedded in $\real^N$ for some $N\in\naturals$.
We will establish notations and definitions that are not standard in the literature.

\hfill

\noindent\textbf{Varifolds and Currents}

Denote by $\currents_k(M;\integer_2)$ and $\cycles_k(M;\integer_2)$ the spaces of $k$-currents modulo $2$ and $k$-cycles in $M$, respectively. 
Let $\rectifiable_k(M)$ be the space of $k$-dimensional rectifiable varifolds in $\real^N$ whose support lies in $M$ with the weak topology (we agree with the definition in \cite[\textsection 2]{jpitts1}).
The subspace of $k$-dimensional integral varifolds is denoted by $\integral_k(M)\subset\rectifiable_k(M)$.

Given $V\in\rectifiable_k(M)$ we denote by $\|V\|$ the Radon measure in $M$ associated with $V$, we call $\|V\|(M)$ the mass of $V$. 
Now, given a $k$-current $T\in\currents_k(M;\integer_2)$ we denote $|T|\in\integral_k(M)$ the integral varifold associated to $T$ and to simplify notation we write $\|T\|$ its associated Radon measure in $M$.
Reversely, if $V\in\integral_k(M)$ then $[V]\in\currents_k(M;\integer_2)$ denotes the unique $k$-current satisfying $\Theta^k([V],x)=\Theta^k(V,x)\; mod\;2$ for all $x\in M$ (see \cite{bwhite1}).

The weak topology in $\rectifiable_k(M)$ is induced by the $\fmetric$-metric, denoted by $\fmetric$ (see \cite[\textsection 2]{jpitts1}).
On the space of currents we will work with three different topologies induced by the flat metric $\flatnorm$, the mass $\mass$ and the $\fmetric$-metric for currents also denoted by $\fmetric$.
For the definition of the first two see \cite[\textsection 4.2.26]{hfederer1}, the latter is defined as
\begin{equation*}
   \fmetric(T,S)=\flatnorm(T-S) + \fmetric(|T|,|S|), 
\end{equation*}
for all $T,S\in\currents_k(M;\integer_2)$.
We will always assume $\currents_k(M;\integer_2)$ and $\cycles_k(M;\integer_2)$ to be endowed with the flat topology unless otherwise specified.

\hfill

\noindent\textbf{Almost-minimising Varifolds}

For our purposes it will be sufficient to only consider $\integer_2$-almost-minimising varifolds, the definition is the same for a different group $G$ (see \cite[\textsection 3.1]{jpitts1}).
We also remark that our definition is slightly different from \cite{jpitts1} but all the results therein contained remain true.

\begin{definition}
Let $U\subset M$ be an open set, $\varepsilon>0$ and $\delta>0$. We define
\begin{equation*}
\eminimising_k(U; \varepsilon,\delta)\subset\cycles_k(M;\integer_2)
\end{equation*}
to be the set of cycles $T\in\cycles_k(M;\integer_2)$ such that any finite sequence $T_1,\ldots,T_m \in \cycles_k(M;\integer_2)$ satisfying
\begin{enumerate}[(a)]
   \item $\supp(T-T_i)\subset U$ for all $i=1,\ldots, m$;
   \item $\flatnorm(T_i,T_{i-1})\leq\delta$ for all $i=1,\ldots, m$ and
   \item $\mass(T_i)\leq\mass(T)+\delta$
\end{enumerate}
must also satisfy
\begin{equation*}
\mass(T_m)\geq\mass(T)-\varepsilon.
\end{equation*}

We say that a varifold $V\in\varifolds_k(M)$ is almost-minimising in $U$ if for every $\varepsilon>0$ there exists $\delta>0$ and $T\in\eminimising_k(U;\varepsilon;\delta)$ such that
\begin{equation*}
   \fmetric(V,|T|)<\varepsilon.
\end{equation*}

Furthermore, we say that $V$ is almost-minimising in annuli if for every $p\in\supp\|V\|$ there exists $r>0$ such that $V$ is almost-minimising in the annulus $A(p;s,r)=B(p,r)\setminus B(p,s)$ for all positive $s<r$.
\end{definition}

The following is a well known regularity theorem for stationary varifolds of codimension $1$.
This was originally proven in by Pitts, up to dimension $n+1\leq 6$ and later extended by Schoen-Simon to $n+1\leq 7$.
\begin{theorem}[{\cite[\textsection 7]{jpitts1}, \cite[\textsection 4]{rschoen-lsimon1}}]\label{regularity theorem}
Let $M^{n+1}$ be a closed manifold of dimension $n+1$ with $3\leq n+1 \leq 7$.
If $V\in\integral_n(M)$ is stationary and almost-minimising in annuli, then $\supp\|V\|$ is a smooth embedded minimal hypersurface.
\end{theorem}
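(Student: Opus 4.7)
The plan is to verify Allard's regularity hypotheses at every $p\in\supp\|V\|$ by a tangent cone analysis. Since $V$ is stationary, each tangent varifold $C$ at $p$ is a stationary integral cone in $\real^{n+1}$. The goal is to produce one such $C$ that is a hyperplane of multiplicity one; Allard's theorem then gives a smooth embedded minimal hypersurface structure for $V$ in a neighbourhood of $p$, and varying $p$ over $\supp\|V\|$ yields the stated conclusion.

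The core step is the construction of Pitts replacements on each annulus where $V$ is almost-minimising. Fix $p\in\supp\|V\|$ and $r_0>0$ as in the hypothesis, so that $V$ is almost-minimising in $A(p;s,r_0)$ for every $0<s<r_0$. Given $\varepsilon_i\downarrow 0$, I would pick $T_i\in\eminimising_n(A(p;s,r);\varepsilon_i,\delta_i)$ with $\fmetric(V,|T_i|)<\varepsilon_i$, and then minimise $\mass$ over the collection of cycles reachable from $T_i$ by an admissible $\flatnorm$-chain supported in $A(p;s,r)$ of the type appearing in the definition of $\eminimising_n$. A standard compactness and lower semicontinuity argument produces a minimiser $T_i^*$, and a diagonal subsequence yields a varifold limit $V^*\in\integral_n(M)$ which coincides with $V$ outside $\bar A(p;s,r)$, has the same total mass as $V$, and whose restriction to the annulus is locally mass-minimising in the classical sense. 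In codimension one with $n\leq 6$, the interior regularity theorem for mass-minimising integral currents combined with the Schoen--Simon curvature estimate then shows that $V^*$ is a smooth stable embedded minimal hypersurface inside $A(p;s,r)$.

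Applying the replacement on two overlapping annuli and invoking the strong maximum principle together with unique continuation for minimal hypersurfaces, one patches the replacements into a smooth stable minimal hypersurface that agrees with $V$ in $A(p;s,r_0)$, thereby proving that $V$ itself is regular and stable there. Rescaling $V$ about $p$ by a sequence $\sigma_j\downarrow 0$ and extracting a subsequential varifold limit $C$, the uniform Schoen--Simon stability estimates survive the blow-up, so $C$ is a stable stationary integral cone in $\real^{n+1}$. Simons' classification of stable minimal hypercones, valid precisely when $n\leq 6$, forces $C$ to be a hyperplane of some positive integer multiplicity; monotonicity of density together with the already-established smoothness of $V$ away from $p$ rules out multiplicity greater than one, at which point Allard's theorem closes the argument.

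The main obstacle is the replacement construction and its patching: the $\eminimising$ condition only controls loss of $\mass$ along admissible $\flatnorm$-chains, so one must argue that the varifold limit of chain-minimisers is genuinely stationary in all of $M$ (not merely across its chain-class) and that its restriction to the annulus is a local mass-minimiser strong enough to invoke codimension-one interior regularity. This is exactly the bridge built by Pitts' combinatorial comparison argument, which transfers the weak \emph{chain-minimising} property into honest local area-minimisation. Once that is in place, the Schoen--Simon bounds propagate through the blow-up and Simons' theorem delivers the hyperplane tangent cone needed to apply Allard.
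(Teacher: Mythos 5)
The paper does not give a proof of Theorem~\ref{regularity theorem}; it is a cited result (Pitts \cite{jpitts1}~\S7 for $n+1\leq 6$, Schoen--Simon \cite{rschoen-lsimon1}~\S4 for $n+1=7$), so there is no paper-internal argument to compare against. Your outline does reproduce the broad shape of the actual Pitts/Schoen--Simon proof: construct replacements on annuli using the almost-minimising property, use Schoen--Simon curvature estimates for stable codimension-one hypersurfaces to get regularity on the annuli, and then push the regularity across the centre point by a tangent-cone analysis.

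There is, however, a genuine gap in the way you close at the point $p$. First, you rescale $V$ about $p$ and claim ``the uniform Schoen--Simon stability estimates survive the blow-up.'' But stability is only established for the replacement inside the annulus $A(p;s,r_0)$; on $B(p,s)$ the replacement agrees with $V$ and no stability is known there, so the dilations $\sigma_j^{-1}(V-p)$ need not eventually land in a region where the estimates apply. Pitts handles this by letting $s\to 0$ and showing the replacements converge to a stable smooth hypersurface on the \emph{punctured} ball, and then removes $p$ by a separate capacity/monotonicity argument. Second, and more decisively, you assert that ``monotonicity of density together with the already-established smoothness of $V$ away from $p$ rules out multiplicity greater than one,'' and then invoke Allard. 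This is false: $V=2\cdot\Sigma$ for a smooth embedded closed minimal hypersurface $\Sigma$ is stationary, almost-minimising in annuli, smooth away from any $p\in\Sigma$, and has density $2$ at every point, so every tangent cone is a multiplicity-two plane. Higher multiplicity is genuinely possible in this setting --- the theorem only asserts regularity of $\supp\|V\|$ --- and Allard's theorem (which requires density near $1$) is the wrong tool. The correct closing move is to use the sheeted structure coming from Schoen--Simon regularity of the stable replacements (finitely many ordered graphs over a plane near $p$) and extend each sheet across $p$ by the monotonicity formula and a removable-singularity/unique-continuation argument, never reducing to multiplicity one.
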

\begin{remark}
If $n\geq 7$ then it was also proven that $\supp\|V\|$ has a singular set of Hausdorff dimension at most $n-7$.
\end{remark}
%%%%
%%%%
%%%% almgren-pitts minmax theory
%%%%

\hfill

\noindent\textbf{Almgren-Pitts Min-max Theory}

We want to present the appropriate modification of the Almgren-Pitts Min-max Theory that will be necessary.
All of the results and definitions are taken from \cite{fmarques-aneves1} where one can find detailed proofs.
Henceforth we restrict ourselves to the codimension one case, that is, $k=n$ and $M$ has dimension $n+1$.

Firstly, given a cell complex $X$ and $l\in\naturals\cup\{0\}$ we denote by $X_{(l)}$ the set of $l$-cells. 
Let $I^m=[0,1]^m$ denote the $m$-dimensional cube, for each $j\in\naturals$ we denote by $I(1,j)$ the cell decomposition of $I=I^1$ whose $0$-cells and $1$-cells are given by
\begin{equation*}
   \begin{aligned}
      I(1,j)_{(0)} & = \{[0],[3^{-j}],\ldots,[1-3^{-j}],[1]\}, \\
      I(1,j)_{(1)} & = \{[0,3^{-j}],\ldots,[1-3^{-j},1]\}.
   \end{aligned}
\end{equation*}
Now, if $m>1$ then, for each $j\in\naturals$, the standard cell complex of $I^m$ is defined as 
\begin{equation*}
   I(m,j)=\underbrace{I(1,j) \otimes\ldots\otimes I(1,j)}_{m \text{ times}}.
\end{equation*}
\begin{definition}
A set $X\subset I^m$ is said to be a \textit{cubical subcomplex} of $I^m$ if $X$ is a subcomplex of $I(m,j)$ for some $j\in\naturals$.
%That is, $X_{(l)}\subset I(m,j)_{(l)}$ for all $0\leq l\leq \dim(X)$.
\end{definition}
By abuse of notation we write $X$ for both the cell decomposition and its support.
Note that the dimension of $X$ is not required to be $m$.

If $X$ is a cubical subcomplex of $I(m,j)$ and $l\geq j$ we write $X(l)$ for the union of all cells in $I(m,l)$ whose support is contained in $X$.

\begin{definition}
For a cubical subcomplex $X\subset I^m$, we say that a map $\Phi:X\rightarrow \ncycles$ has \textit{no concentration of mass} if
\begin{equation*}
   \lim_{r\rightarrow 0}\sup\{\|\Phi(x)\|(B(q,r)):x\in X \text{ and } q\in M\}=0.
\end{equation*}
\end{definition}
\begin{remark}
One can show that mass continuous maps have no concentration of mass (see \cite[Lemma 3.8]{fmarques-aneves2})
\end{remark}

\begin{definition}
Let $\{m_i\}\subset \naturals$ be positive integers, $X_i\subset I^{m_i}$ cubical subcomplexes and $S=\{\Phi_i:X_i\rightarrow \ncycles\}$ a sequence of flat continuous maps.
We define the width of a sequence of maps as
\begin{equation*}
   \lwidth(S)=\limsup_{i\rightarrow\infty} \sup\{\mass(\Phi_i(x)):x\in X_i\}
\end{equation*}
and the following compact set of critical varifolds
\begin{equation*}
   \begin{aligned}
      \critical(S)=\{V\in\rectifiable_n(M) :\, & V=\lim_{j\rightarrow\infty}|\Phi_{i_j}(x_j)| \text{ for some increasing sequence }\\
                                               & \quad \{i_j\}_{j\in\naturals} \text{, } x_j\in X_{i_j} \text{ and } \|V\|(M)=\lwidth(S)\}.
   \end{aligned}
\end{equation*}

In case we have a fixed domain $X$ and a map $\Phi:X\rightarrow\ncycles$ it defines an homotopy class (with free boundary) of maps with no concentration of mass
\begin{equation*}
   \begin{aligned}
      \null[\Phi] = \{ \Psi:X\rightarrow\ncycles:\, \Psi & \text{ is flat homotopic to } \Phi\\
                                                    & \text{ and has no concentration of mass} \}
   \end{aligned}
\end{equation*}
and its width is given by
\begin{equation*}
   \lwidth[\Phi]=\inf_{\Psi\in[\Phi]}\sup_{x\in X}\mass(\Psi(x)).
\end{equation*}
\end{definition}
\begin{remark}
Although the nomenclature is the same it will always be clear when we refer to the width of a sequence or the width of an homotopy class.
\end{remark}

The following theorem is a consquence of the interpolation theorems in \cite{fmarques-aneves2}.

\begin{theorem}\label{from flat to f}
Let $X\subset I^m$ be a cubical subcomplex and $\Phi:X\rightarrow\ncycles$ be a flat continuous map with no concentration of mass.
For any $\varepsilon > 0$ there exist $l\in\naturals$, $\tilde{X}=X(l)$ cubical subcomplex and $\tilde{\Phi}:\tilde{X}\rightarrow\cycles_n(M;\mass;\integer_2)$ a mass continuous map satisfying:
\begin{enumerate}[(i)]
   \item $\Phi\rest{\tilde{X}}$ is homotopic to $\tilde{\Phi}$ in the flat topology;
   \item $\lwidth[\Phi] \leq \sup \{\mass(\tilde{\Phi}(x)):x\in\tilde{X} \}\leq \lwidth[\Phi]+\varepsilon $.
\end{enumerate}
\end{theorem}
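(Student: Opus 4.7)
The plan is to combine the two main interpolation theorems of Marques--Neves: the discretization theorem (flat-continuous maps with no concentration of mass admit discrete approximations on fine cubical refinements with controlled mass) and the interpolation theorem (discrete maps with sufficiently small fineness extend to mass-continuous maps with control on sup-mass, and flat-homotopic to the natural Almgren extension).

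First, by the definition of $\lwidth[\Phi]$, I pick some $\Psi\in[\Phi]$ with $\sup_{x\in X}\mass(\Psi(x)) < \lwidth[\Phi] + \varepsilon/2$. By hypothesis $\Psi$ is flat-continuous and has no concentration of mass, so the discretization theorem applies: for any $\eta>0$ there exist $l_0\in\naturals$ and a discrete map $\psi:X(l_0)_{(0)}\rightarrow\ncycles$ whose $\fineness$ (in the flat metric, together with a mass control on adjacent vertices) is less than $\eta$ and which satisfies $\sup\{\mass(\psi(x)):x\in X(l_0)_{(0)}\} \leq \sup_{x\in X}\mass(\Psi(x)) + \varepsilon/4$. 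Moreover, the construction furnishes a flat-continuous homotopy between $\Psi\rest{X(l_0)}$ and the piecewise extension induced by $\psi$.

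Next, provided $\eta$ was chosen small enough, the interpolation theorem produces, on a further refinement $\tilde{X}=X(l)$ with $l\geq l_0$, a mass-continuous map $\tilde{\Phi}:\tilde{X}\rightarrow\cycles_n(M;\mass;\integer_2)$ extending $\psi$ up to controlled error, with
\begin{equation*}
\sup_{x\in\tilde{X}}\mass(\tilde{\Phi}(x)) \leq \sup_{x\in X(l_0)_{(0)}}\mass(\psi(x)) + \varepsilon/4 \leq \lwidth[\Phi]+\varepsilon,
\end{equation*}
which gives one half of (ii); the other half, $\lwidth[\Phi]\leq\sup\mass(\tilde{\Phi})$, is immediate since $\tilde{\Phi}\in[\Phi]$. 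For (i), transitivity does the work: $\tilde{\Phi}$ is flat-homotopic to the piecewise extension of $\psi$ by construction of the interpolation, that extension is flat-homotopic to $\Psi\rest{\tilde{X}}$ by the discretization step, and $\Psi\rest{\tilde{X}}$ is flat-homotopic to $\Phi\rest{\tilde{X}}$ because $\Psi\in[\Phi]$ (just restrict the ambient homotopy to $\tilde{X}$).

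The main technical obstacle is calibrating the fineness parameter $\eta$ of $\psi$ against the hypotheses of the interpolation theorem so that the mass loss in each of the two stages is at most $\varepsilon/4$; this is precisely where the absence of concentration of mass is crucial, since the bridging Almgren--Pitts homotopies between adjacent discrete values have mass bounded in terms of the local mass concentration of $\Psi$. Everything else is bookkeeping through the two cited interpolation statements of \cite{fmarques-aneves2}.
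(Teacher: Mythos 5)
Your proof is correct and is exactly the argument the paper has in mind: the paper itself only remarks that Theorem~\ref{from flat to f} ``is a consequence of the interpolation theorems'' of Marques--Neves, and you supply precisely those details by chaining the discretization theorem (flat-continuous with no mass concentration $\to$ fine discrete map with controlled sup-mass) with the Almgren interpolation theorem (fine discrete map $\to$ mass-continuous map, flat-homotopic to the piecewise extension), then closing with the observation that $\tilde{\Phi}\in[\Phi]$ to get the lower bound in (ii). The only cosmetic remark is that the interpolation step in Marques--Neves already yields a mass-continuous map on the same refinement $X(l_0)$ rather than requiring a further one, so one may simply take $l=l_0$; this does not affect the argument.
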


The critical set of a sequence is the set of candidates to be critical min-max varifolds.
However, it is not even true that they are stationary in general.
We can in fact refine a sequence of flat continuous maps such that its critical set contains only stationary varifolds.
The result follows by applying the previous Theorem to each element of the sequence and a pull-tight procedure.

\begin{theorem}[{\cite{fmarques-aneves1,fmarques-aneves2},\cite[\textsection 4.3]{jpitts1}}]\label{pulltight theorem}
Let $X_i\subset I^{m_i}$ be cubical subcomplexes of $I(m_i,j_i)$ and $S=\{\Phi_i:X_i\rightarrow\ncycles\}$ be a sequence of flat continuous maps with no concentration of mass.
There exist $l_i\geq j_i$, $\tilde{X_i}=X_i(l_i)$ cubical subcomplexes and $\tilde{S}=\{\tilde{\Phi}_i:\tilde{X_i}\rightarrow\cycles_n(M;\mass;\integer_2)\}$ sequence of mass continuous maps such that:
\begin{enumerate}[(i)]
   \item $\Phi_i\rest{\tilde{X_i}}$ is homotopic to $\tilde{\Phi}_i$ in the flat topology;
   \item if $V\in\critical(\tilde{S})$ then $V$ is stationary.
   \item $\lwidth(\tilde{S})\leq \lwidth(S)$;
\end{enumerate}
Furthermore, if $\lwidth(\tilde{S})= \lwidth(S)$ then
\begin{equation*}
   \critical(\tilde{S})\subset \critical(S)\cap\{V\in\rectifiable_n(M):V\text{ is stationary}\}
\end{equation*}
\end{theorem}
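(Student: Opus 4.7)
The plan is to proceed in two stages: first convert each flat continuous $\Phi_i$ into a mass continuous map on a refinement of $X_i$ via Theorem \ref{from flat to f}, and then apply a continuous pull-tight deformation to remove non-stationary varifolds from the critical set.

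For stage one, I apply Theorem \ref{from flat to f} to each $\Phi_i$ with tolerance $\varepsilon_i = 1/i$, obtaining integers $l_i \geq j_i$, refinements $\tilde{X}_i = X_i(l_i)$, and mass continuous maps $\hat{\Phi}_i : \tilde{X}_i \to \cycles_n(M;\mass;\integer_2)$ that are flat-homotopic to $\Phi_i\rest{\tilde{X}_i}$ and satisfy $\sup_{x\in\tilde{X}_i}\mass(\hat{\Phi}_i(x)) \leq \sup_{x\in X_i}\mass(\Phi_i(x)) + \varepsilon_i$. Writing $\hat{S} = \{\hat{\Phi}_i\}$, one has $\lwidth(\hat{S}) \leq \lwidth(S)$; moreover, because the interpolation produced in \cite{fmarques-aneves2} yields $\fmetric$-small corrections wherever the mass is close to the supremum, any limit of maximal-mass elements of $\hat{\Phi}_{i_j}(x_j)$ is also a limit of appropriate $\Phi_{i_j}(y_j)$, so $\critical(\hat{S}) \subset \critical(S)$ whenever the two widths agree.

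For stage two I build a Pitts-style pull-tight. Fix $L > \lwidth(\hat{S})$ and let $\mathcal{K}_L \subset \cycles_n(M;\mass;\integer_2)$ denote the subset of cycles of mass at most $L$. The set of stationary integral varifolds of mass $\leq L$ is $\fmetric$-compact, so for every non-stationary $T \in \mathcal{K}_L$ one may pick a vector field $Y_T$ on $M$ whose induced first variation of $|T|$ is negative, together with a mass-neighborhood of $T$ on which $Y_T$ still strictly decreases mass. Gluing finitely many such choices on slabs $\{a \leq \mass(T) \leq b\}$ via a continuous partition of unity produces a continuous assignment $T \mapsto X^T_{\cdot}$ of time-dependent vector fields, whose time-$t$ flows $\phi^T_t$ depend continuously on $T$ in the mass topology. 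One then sets
\begin{equation*}
\mathcal{H}(t, T) = \bigl(\phi^T_{t\eta(\mass(T))}\bigr)_\sharp T,
\end{equation*}
where $\eta$ is a smooth cutoff vanishing below $\lwidth(\hat{S})/2$ and equal to a small positive constant near $L$. By construction, $\mathcal{H} : [0,1]\times\mathcal{K}_L \to \mathcal{K}_L$ is mass continuous, preserves flat homotopy classes via the ambient isotopy, acts as the identity on stationary cycles, and strictly decreases the mass of any non-stationary cycle whose mass is close to $L$.

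Setting $\tilde{\Phi}_i(x) = \mathcal{H}(1, \hat{\Phi}_i(x))$ then yields the theorem: (i) follows by concatenating the stage one flat homotopy with the stage two isotopy; (iii) is immediate from $\mass(\mathcal{H}(1,T)) \leq \mass(T)$; and (ii) holds because any $V \in \critical(\tilde{S})$ has $\|V\|(M) = \lwidth(\tilde{S})$, so if $V$ were non-stationary the strict mass-decrease near maximizing points would contradict the fact that $V$ is a limit of maximizing values. The furthermore clause follows since, in the equality case, the $\hat{\Phi}_{i_j}(x_j)$ converging to $V$ must have mass approaching $\lwidth(\hat{S}) = \lwidth(S)$, where $\mathcal{H}$ acts by arbitrarily small isotopies, so $V \in \critical(\hat{S}) \subset \critical(S)$. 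The main obstacle is making $\mathcal{H}$ genuinely continuous on $(\cycles_n(M;\mass;\integer_2), \mass)$: although each $Y_T$ depends only on the varifold $|T|$, the assignment $T \mapsto \phi^T_t$ and the finite cover by mass-decreasing neighborhoods must be compatible with the $\fmetric$-compactness of stationary integral varifolds as well as with the no-concentration-of-mass hypothesis. This technical construction is the content of \cite[\textsection 4.3]{jpitts1}, adapted to the flat/mass continuous framework in \cite{fmarques-aneves1,fmarques-aneves2}.
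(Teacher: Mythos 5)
The paper does not actually prove this statement; it gives only a one-sentence sketch (``apply the previous Theorem to each element of the sequence and a pull-tight procedure'') and cites \cite{fmarques-aneves1,fmarques-aneves2} and \cite[\textsection 4.3]{jpitts1}. Your two-stage outline --- interpolate each $\Phi_i$ to a mass continuous $\hat{\Phi}_i$ via Theorem \ref{from flat to f} with $\varepsilon_i = 1/i$, then apply a continuous pull-tight deformation --- matches that sketch.

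Your stage-two construction, however, contains a genuine error. In Pitts' pull-tight (and in the Marques--Neves adaptation) the cover and the time-scaling are taken with respect to the $\fmetric$-distance to the compact set $\mathcal{V}_0$ of stationary varifolds of mass $\leq L$: one covers the $\fmetric$-annuli $\{V : 2^{-k} \leq \fmetric(V,\mathcal{V}_0) \leq 2^{-k+1}\}$ by finitely many balls, glues vector fields that are scaled to tend to zero as $k\to\infty$, and the resulting flow time depends on $\fmetric(V,\mathcal{V}_0)$. This is precisely what guarantees that the homotopy fixes every stationary cycle and that the mass of $\mathcal{H}(1,V)$ is strictly less than $\|V\|(M)$ if and only if $V$ is non-stationary, by a continuous amount. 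You instead glue on mass-slabs $\{a \leq \mass(T) \leq b\}$ and scale by a function $\eta(\mass(T))$ of the mass alone. This does not distinguish stationary from non-stationary cycles: a stationary cycle $T_0$ with $\mass(T_0)$ near $L$ has $\eta(\mass(T_0)) > 0$ and, since your partition of unity over mass-slabs will generically mix in vector fields chosen for nearby non-stationary cycles, the glued field $X^{T_0}$ need not vanish. Hence your $\mathcal{H}$ does not act as the identity on stationary cycles as you assert, and the argument for conclusion (ii) and the ``furthermore'' clause (which requires that deformations become small exactly near the stationary set so that limit points are unchanged) breaks down. Separately, your stage-one claim that $\critical(\hat{S}) \subset \critical(S)$ follows from ``$\fmetric$-small corrections'' is not supported by Theorem \ref{from flat to f} as stated, which controls only the flat homotopy type and the supremum of mass, not $\fmetric$-proximity; this containment requires tracking the $\fmetric$-estimates inside the Marques--Neves interpolation rather than citing the black-boxed statement.
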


The following theorem shows the existence of almost-minimising varifolds and it was originally proven by Pitts for maps with cubical domain and a boundary condition.
However, it remains true for a cubical subcomplex and allowing homotopies with free boundary (see \cite{fmarques-aneves2}).

\begin{theorem}\label{existence of minmax hypersurface}
Let $X\subset I^m$ be a cubical subcomplex and $\Phi:X\rightarrow \cycles_n(M;\fmetric;\integer_2)$ a $\fmetric$-continuous map. If $\lwidth[\Phi]>0$ then there exists $V\in\integral_n(M)$ satisfying
\begin{enumerate}[(i)]
   \item $V$ is stationary;
   \item $V$ is almost-minimising in annuli;
   \item $\|V\|(M)=\lwidth[\Phi]$.
\end{enumerate}
\end{theorem}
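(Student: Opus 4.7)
The plan is to combine the pull-tight construction with Pitts' combinatorial argument, adapted to the cubical subcomplex setting as in \cite{fmarques-aneves2}. First, pick a minimising sequence $\{\Phi_i\}\subset[\Phi]$ so that $\sup_{x\in X}\mass(\Phi_i(x))\to\lwidth[\Phi]$. Each $\Phi_i$ is flat continuous with no concentration of mass, so Theorem \ref{from flat to f} lets me replace it (after a dyadic refinement of the domain) by a mass continuous map that is flat-homotopic to $\Phi_i$ and whose supremum of mass approximates $\lwidth[\Phi]$ to within $1/i$. Feeding this resulting sequence into Theorem \ref{pulltight theorem} yields a pulled-tight sequence $\tilde{S}=\{\tilde\Phi_i:\tilde X_i\to\cycles_n(M;\mass;\integer_2)\}$ with $\lwidth(\tilde S)=\lwidth[\Phi]$ and every element of $\critical(\tilde S)$ stationary. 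Because the masses are uniformly bounded and the underlying varifolds are rectifiable, the compactness theorem for rectifiable varifolds ensures $\critical(\tilde S)\neq\emptyset$ and every limit has mass equal to $\lwidth[\Phi]$.

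Next I would extract an almost-minimising varifold from $\critical(\tilde S)$ by the Pitts combinatorial argument. For a fixed $\varepsilon>0$ and a point $p\in M$, suppose toward contradiction that no $V\in\critical(\tilde S)$ is almost-minimising in any annulus centred at $p$. Then for each $i$ and each $x\in\tilde X_i$ near a realising sequence, the cycle $\tilde\Phi_i(x)$ admits, in some annulus $A(p;s,r)$, a finite deformation sequence satisfying conditions (a)--(c) of the definition of $\eminimising$ that drops the mass by more than $\varepsilon$. Using the mass continuity of $\tilde\Phi_i$, one interpolates this discrete deformation into a continuous modification of $\tilde\Phi_i$ supported on a small neighbourhood of $x$ in the cubical domain. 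Since the deformation is localised in the annulus, this modification is still flat-homotopic to $\tilde\Phi_i$ and therefore to $\Phi$ on $\tilde X_i$. Applying this modification at every near-maximum point produces a homotopic map with strictly smaller supremum of mass, contradicting $\lwidth(\tilde S)=\lwidth[\Phi]$.

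To make this work at every point simultaneously, one invokes the Pitts lemma bounding the number of pairwise disjoint annuli in which the almost-minimising condition can fail. Concretely, for each $p\in M$ fix a small $r(p)>0$ and repeat the argument on an open cover by annuli of inner radius $s$ and outer radius $r(p)$, letting $s\downarrow 0$. A diagonal subsequence then produces $V\in\critical(\tilde S)$ that is almost-minimising in $A(p;s,r(p))$ for every $p\in\supp\|V\|$ and every $0<s<r(p)$, which by definition is almost-minimising in annuli. Since $V$ lies in $\critical(\tilde S)$ it is stationary with $\|V\|(M)=\lwidth[\Phi]$, and since each $\tilde\Phi_i(x)\in\cycles_n(M;\integer_2)$ its varifold limit is in fact integral, so $V\in\integral_n(M)$, giving (i)--(iii).

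The main obstacle is the second step: implementing the local deformation in the cubical domain that strictly reduces the sup of mass while preserving flat homotopy. This is the heart of Pitts' construction and requires the discrete interpolation machinery together with the combinatorial estimate on disjoint bad annuli; in the present setting with cubical subcomplexes and free-boundary homotopies the adaptations carried out in \cite{fmarques-aneves2} apply verbatim, so the argument reduces to citing those results rather than redoing them.
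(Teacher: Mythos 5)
The paper does not prove this theorem itself; it cites Pitts \cite{jpitts1} for the original cubical-domain-with-boundary version and Marques--Neves \cite{fmarques-aneves2} for the extension to cubical subcomplexes with free-boundary homotopies. Your outline follows the same route as that cited proof: take a minimising sequence in $[\Phi]$, interpolate to mass-continuous maps, pull tight so that every critical varifold is stationary while $\lwidth(\tilde{S})=\lwidth[\Phi]$ is preserved (this requires observing that each $\tilde{\Phi}_i$ stays in $[\Phi]$, which you only state implicitly), and then invoke Pitts' combinatorial argument to extract an almost-minimising element. One small redundancy: Theorem \ref{pulltight theorem} already outputs mass-continuous maps, so the preliminary pass through Theorem \ref{from flat to f} is unnecessary, though harmless.

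The one substantive inaccuracy is your description of the final step as a ``diagonal subsequence'' over points $p$ and shrinking inner radii $s$. That is not how Pitts closes the argument, and as stated it does not produce the desired $V$. The actual structure is a single global proof by contradiction: assume that \emph{every} element of $\critical(\tilde{S})$ fails to be almost-minimising in annuli at some point of its support; using stationarity and compactness of $\critical(\tilde{S})$ one gets uniform annuli and uniform $\varepsilon,\delta$; the combinatorial ($3^{m}$-colouring) lemma then shows that the discrete mass-reducing deformations near the various bad annuli can be chained compatibly over the cells of the cubical domain, yielding a sequence flat-homotopic to $\{\tilde{\Phi}_i\}$ with width strictly below $\lwidth[\Phi]$ --- a contradiction. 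The varifold $V$ exists by negation of this hypothesis, not as a diagonal limit over covers by annuli. You do correctly isolate the discrete interpolation machinery and the combinatorial estimate as the technical heart, and deferring those details to \cite{fmarques-aneves2} is exactly what the paper itself does.
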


From the proof of the previous theorem we extract a result that follows from Pitts' combinatorial arguments \cite[\textsection 4.10]{jpitts1}.
To obtain the version that we state here it is necessary to further apply the interpolation theorems in \cite{fmarques-aneves2}.

\begin{theorem}\label{combinatorial sweepout}
Fix $m\in\naturals$ and let $X_i\subset I^m$ be cubical subcomplexes and $S=\{\Phi_i:X_i\rightarrow\ncycles\}$ be a sequence of flat continuous maps with no concentration of mass such that every $V\in\critical(S)$ is stationary.

If no element of $\critical(S)$ is almost-minimising in annuli then there exist a non-decreasing sequence $\{l_i\}_{i\in\naturals}\subset\naturals$, $X^*_i=X_i(l_i)$ cubical subcomplexes and a sequence of mass continuous maps $S^*=\{\Phi^*_i:X^*_i\rightarrow \cycles_n(M;\mass;\integer_2)\}$ such that
\begin{enumerate}[(i)]
   \item $\Phi_i\rest{X_i^*}$ is homotopic to $\Phi^*_i$ in the flat topology;
   \item $\lwidth(S^*)<\lwidth(S)$.
\end{enumerate}
\end{theorem}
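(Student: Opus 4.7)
The strategy is to apply Pitts' combinatorial deformation from \cite[\textsection 4.10]{jpitts1} to each map $\Phi_i$ individually, and then invoke the interpolation theorems from \cite{fmarques-aneves2} to return from the discrete to the mass continuous setting. The hypothesis that no $V\in\critical(S)$ is almost-minimising in annuli is exactly what powers the argument: by negating the definition, for each such $V$ there is a point $p_V\in\supp\|V\|$ and a radius $r_V$ such that, for every small $s<r_V$ and every $\delta$, there is a finite sequence in $\cycles_n(M;\integer_2)$ supported in $A(p_V;s,r_V)$ that witnesses a definite mass drop of some $\varepsilon_V>0$, in the sense of violating membership in $\eminimising_n(A(p_V;s,r_V);\varepsilon_V,\delta)$.

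First I would extract a uniform geometric obstruction along the sequence. Since $\critical(S)$ is compact in the $\fmetric$ topology and every element is stationary, a continuity/covering argument produces finitely many $\fmetric$-neighbourhoods $\mathcal{U}_1,\ldots,\mathcal{U}_N$ whose union contains $\critical(S)$, each $\mathcal{U}_\alpha$ being associated with a disjoint collection of annuli in $M$ and a uniform $\varepsilon>0$, such that any cycle in $\mathcal{U}_\alpha$ admits a localised mass-decreasing competitor sequence (supported in those annuli) dropping mass by at least $\varepsilon$. For $i$ sufficiently large, any $x\in X_i$ with $\mass(\Phi_i(x))>\lwidth(S)-\varepsilon/2$ must have $\Phi_i(x)$ $\fmetric$-close to $\critical(S)$, by definition of $\critical(S)$ and the no-concentration-of-mass property; hence such $\Phi_i(x)$ falls in some $\mathcal{U}_\alpha$.

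Next I would refine the cell structure, choosing $l_i\geq j_i$ so that on each cell of $I(m,l_i)$ the flat oscillation of $\Phi_i$ is smaller than a prescribed threshold. On the vertex set of $X_i(l_i)$, I colour each vertex whose image has mass close to $\lwidth(S)$ by an index $\alpha$ for which $\Phi_i(x)\in\mathcal{U}_\alpha$, and I apply Pitts' combinatorial scheme: using the disjointness of the annuli associated with distinct colours together with the pigeonhole arguments in \cite[\textsection 4.10]{jpitts1}, one can coherently replace the values of $\Phi_i$ at these vertices by the local mass-decreasing competitors, at the cost of subdividing further if two adjacent vertices carry different colours. This yields a discrete vertex-deformation on $X_i(l_i)$, homotopic to $\Phi_i\rest{X_i(l_i)}$ in the flat topology, whose maximum mass is at most $\lwidth(S)-\varepsilon/2$.

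Finally I would apply the interpolation machinery underlying Theorem \ref{from flat to f} to promote this discrete vertex-deformation to a mass continuous map $\Phi_i^*\colon X_i(l_i)\to\cycles_n(M;\mass;\integer_2)$, homotopic to $\Phi_i\rest{X_i(l_i)}$ in the flat topology, with $\sup_x\mass(\Phi_i^*(x))$ exceeding the discrete maximum by an arbitrarily small amount; letting $i\to\infty$ then gives $\lwidth(S^*)\leq\lwidth(S)-\varepsilon/4<\lwidth(S)$. The main obstacle I expect is the bookkeeping of the colouring scheme uniformly along the sequence, namely ensuring that the finite annular cover chosen from $\critical(S)$, the oscillation control provided by the refinement $X_i(l_i)$, and the interpolation error are all compatible. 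The assumption that every $V\in\critical(S)$ is stationary enters at precisely this point, since it guarantees that first-variation artifacts do not accumulate when patching the local deformations across colour boundaries.
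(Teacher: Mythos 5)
Your plan matches what the paper itself invokes: the theorem is stated without proof, deferring entirely to Pitts' combinatorial argument \cite[\textsection 4.10]{jpitts1} together with the interpolation theorems of \cite{fmarques-aneves2}, and your sketch reproduces those two ingredients in the expected order (negate almost-minimising, extract a finite annular cover of the compact set $\critical(S)$, run the discrete deformation, then interpolate back to a mass continuous map). Two cautions worth recording. First, the claim that $\mass(\Phi_i(x))>\lwidth(S)-\varepsilon/2$ forces $|\Phi_i(x)|$ to be $\fmetric$-close to $\critical(S)$ is not automatic for the $\varepsilon$ you obtained from the annular covers: by definition $\critical(S)$ only collects subsequential varifold limits whose mass equals $\lwidth(S)$ exactly, so a near-maximiser at a fixed mass threshold can converge to a varifold of slightly smaller mass that lies outside $\critical(S)$. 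The fix is an order-of-quantifiers issue --- fix the finite cover of $\critical(S)$ and its $\fmetric$-radius first, then choose the mass threshold (possibly much smaller than $\varepsilon/2$) so that near-maximisers along any width-realising subsequence land inside that cover --- which is indeed part of the bookkeeping you flag. Second, the stationarity hypothesis enters primarily through the monotonicity formula (controlling densities and the number of disjoint concentric annuli needed near each point of $\supp\|V\|$) and through the compactness of $\critical(S)$, rather than through ``first-variation artifacts'' at colour boundaries; that phrasing undersells the role stationarity plays in making the annular scheme of \cite[\textsection 4.10]{jpitts1} work at all.
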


In \cite{falmgren1} F.J. Almgren Jr. shows, in particular, the existence of an isomorphism $\pmb{F}_M:\pi_{q}(\cycles_n(M;\integer_2),\{0\})\rightarrow H_{q+n}(M;\integer_2)$ for all $q\in\naturals$ which is called the Almgren isomorphism.
\begin{definition}\label{sweepout-def}
We say that a flat continuous map $\Phi:S^{1}\rightarrow\cycles_n(M;\integer_2)$ is a \textit{sweepout} if $\pmb{F}_M([\Phi])\neq 0$, where $[\Phi]\in\pi_{1}(\cycles_n(M;\integer_2))$.
\end{definition}

It is possible to show the existence of a fundamental cohomology class $\bar\lambda\in H^{1}(\cycles_n(M;\integer_2);\integer_2)$ such that the $p$-th cup product is non-zero for all $p\in\naturals$, $\bar{\lambda}^p\neq 0$.
In particular it is possible to show that the cohomology ring $H^*(\cycles_n(M;\integer_2);\integer_2)$ is isomorphic to the polynomial ring $\integer_2[\bar\lambda]$ generated by $\bar\lambda\in H^{1}$.
For further details see \cite[\textsection 1]{lguth1}.

\begin{definition}\label{psweepout-def}
Let $X\subset I^m$ be a cubical subcomplex for some $m\in\naturals$, $\Phi:X\rightarrow\cycles_n(M;\integer_2)$ a flat continuous map and $p\in\naturals$.
We say that $\Phi$ is a $p$-\textit{sweepout} if
\begin{equation*}
   \Phi^*({\bar\lambda}^p)\neq 0 \in H^{p}(X;\integer_2),
\end{equation*}
where ${\bar\lambda}^p$ is the $p$-th cup product of $\bar\lambda$.
This is equivalent to saying that there exists $\lambda\in H^1(X;\integer_2)$ such that
\begin{enumerate}[(a)]
   \item given any map $\gamma:S^{1}\rightarrow X$, we have $\lambda(\gamma)\neq 0$ if, and only if, $\Phi\circ\gamma$ is a sweepout (as in Definition \ref{sweepout-def}) and
   \item $\lambda^p\neq 0$ in $H^{p}(X;\integer_2)$.
\end{enumerate}
We denote by $\sweepout_p(M)$ the set of $p$-sweepouts in $M$ with no concentration of mass and its admissible domains:
\begin{equation*}
   \begin{aligned}
      \sweepout_p(M) = \{(\Phi,X) : \, & X\subset I^m  \text{ is a }  \text{cubical subcomplex for some } m\in\naturals\\
                                       & \text{ and } \Phi: X\rightarrow \cycles_n(M;\integer_2) \text{ is a } p\text{-sweepout }\\
                                       & \text{ with no concentration of mass}\}.
   \end{aligned}
\end{equation*}
Given a fixed $m\in\naturals$ we denote $\sweepout^{(m)}_p(M)=\{(\Phi,X)\in\sweepout_p(M):X\subset I^m\}$, that is, the $p$-sweepouts with no concentration of mass whose domain is contained in a cube $I^m$ of fixed dimension.
%When the codimension is one, $k=n$, we simply write $\sweepout_p$ and $\sweepout^{(m)}_p$.
\end{definition}
Note that a nullhomotopic map is not a sweepout.
It is easy to see that $\sweepout_p^{(m)}(M)\subset\sweepout_p^{(m+1)}(M)$ and $\sweepout_p(M)=\cup_{m\in\naturals}\sweepout_p^{(m)}(M)$.

The following is an adaptation of an elementary result and is often referred as Vanishing Lemma (see \cite{lguth1} or \cite[Claim 6.3]{fmarques-aneves2}).

\begin{lemma}[Vanishing Lemma]\label{vanishing lemma}
Let $p,l\in\naturals$, $X,Y\subset I^m$ two cubical subcomplexes and $Z=X\cup Y$. If $\Phi:Z\rightarrow\cycles_n(M;\integer_2)$ is a $(p+l)$-sweepout and $\Phi\rest{Y}$ is \textbf{not} a $l$-sweepout then $\Phi\rest{X}$ must be a $p$-sweepout.
\end{lemma}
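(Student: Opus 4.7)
The plan is to argue by contradiction using the standard relative cup-product trick. Set $\lambda:=\Phi^*(\bar\lambda)\in H^1(Z;\integer_2)$; since $\Phi^*$ is a ring homomorphism the hypotheses translate into
\begin{equation*}
   \lambda^{p+l}\neq 0\text{ in }H^{p+l}(Z;\integer_2)
   \qquad\text{and}\qquad
   \lambda^{l}\rest{Y}=0\text{ in }H^{l}(Y;\integer_2),
\end{equation*}
and the goal becomes showing $\lambda^{p}\rest{X}\neq 0$ in $H^{p}(X;\integer_2)$.

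Suppose, for contradiction, that $\lambda^{p}\rest{X}=0$ as well. Since $X$ and $Y$ are subcomplexes of the cubical complex $Z$, the pairs $(Z,X)$ and $(Z,Y)$ are good pairs, and the long exact cohomology sequences of these pairs produce relative lifts $\alpha\in H^{p}(Z,X;\integer_2)$ and $\beta\in H^{l}(Z,Y;\integer_2)$ whose images under the natural maps $H^{*}(Z,X)\to H^{*}(Z)$ and $H^{*}(Z,Y)\to H^{*}(Z)$ are $\lambda^{p}$ and $\lambda^{l}$ respectively.

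The relative cup product
\begin{equation*}
   \smile\colon H^{p}(Z,X;\integer_2)\otimes H^{l}(Z,Y;\integer_2)\longrightarrow H^{p+l}(Z,X\cup Y;\integer_2)
\end{equation*}
sends $\alpha\otimes\beta$ to a class $\alpha\smile\beta$ whose image in $H^{p+l}(Z;\integer_2)$, by naturality of the cup product with respect to the forgetful maps of pairs, equals $\lambda^{p}\smile\lambda^{l}=\lambda^{p+l}$. But since $X\cup Y=Z$, the target group is $H^{p+l}(Z,Z;\integer_2)=0$, so $\alpha\smile\beta=0$ and therefore $\lambda^{p+l}=0$, contradicting the hypothesis that $\Phi$ is a $(p+l)$-sweepout.

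The only subtlety is to confirm the naturality square for the relative cup product under inclusion of pairs, which is standard for CW pairs; I do not anticipate this to be a genuine obstacle, so the argument should be short.
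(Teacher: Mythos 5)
Your argument is essentially identical to the paper's: both set $\lambda$ to be the pullback class (explicitly $\Phi^*\bar\lambda$ in your version, a class satisfying condition (a) of the sweepout definition in theirs), lift $\lambda^p$ and $\lambda^l$ to relative classes in $H^p(Z,X)$ and $H^l(Z,Y)$ via the long exact sequences of those pairs, and then invoke the relative cup product landing in $H^{p+l}(Z,X\cup Y)=H^{p+l}(Z,Z)=0$ to contradict $\lambda^{p+l}\neq 0$. The only cosmetic difference is that you organize the contradiction around the vanishing of $\alpha\smile\beta$, while the paper phrases the same fact as $j_{X\cup Y}^*(\lambda_1\cup\lambda_2)=0$; the substance is the same.
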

\begin{proof}
Take $\lambda\in H^{1}(Z;\integer_2)$ so that condition $(a)$ of Definition \ref{psweepout-def} is satisfied in $Z$ and $\lambda^{p+l}\neq 0$.
Define $\lambda_X=i_X^*\lambda$ and $\lambda_Y=i_Y^*\lambda$, where $i_X,i_Y$ denote the respective inclusion maps onto $Z$. 
Since every $1$-cycle in $X$ or $Y$ is also in $Z$, then condition $(a)$ with respect to $\lambda_X$ and $\lambda_Y$ is satisfied for both spaces.
We can assume that $(\lambda_Y)^l=0$ and we want to prove that $(\lambda_X)^p\neq 0$.

Consider the exact sequence of the pair $(Z,Y)$:
\begin{equation*}
   H^{l}(Z,Y;\integer_2)\xrightarrow{j_Y^*} H^{l}(Z;\integer_2)\xrightarrow{i_Y^*} H^{l}(Y;\integer_2).
\end{equation*}
Because $i_Y^*(\lambda^l)=0$, there exists $\lambda_1\in H^{l}(Z,Y;\integer_2)$ so that $j_Y^*\lambda_1=\lambda^l$.

Now, suppose $(\lambda_X)^p=(i_X^*\lambda)^p=i_X^*(\lambda^p)=0$ and consider the exact sequence for the pair $(Z,X)$:
\begin{equation*}
   H^{p}(Z,X;\integer_2)\xrightarrow{j_X^*} H^{p}(Z;\integer_2)\xrightarrow{i_X^*} H^{p}(X;\integer_2).
\end{equation*}
If we chose $\lambda_2\in H^{p}(Z,X;\integer_2)$ such that $j_X^*\lambda_2=\lambda^p$, then we will have
\begin{equation*}
   j_Y^*\lambda_1\cup j_X^*\lambda_2=\lambda^{p+l}\in H^{(p+l)}(Z;\integer_2).
\end{equation*}
However, $X\cup Y=Z$, hence $H^*(Z,X\cup Y;\integer_2)=0$.
By the definition of cup product on relative cohomology we must have $\lambda_1\cup\lambda_2\in H^{(p+l)}(Z,X\cup Y;\integer_2)$, that is, $\lambda_1\cup\lambda_2=0$(see \cite[\textsection 3.2]{ahatcher1}). 

On the other hand, we have
\begin{equation*}
   \lambda^{p+l}=j_Y^*\lambda_1\cup j_X^*\lambda_2=j_{X\cup Y}^*(\lambda_1\cup\lambda_2)=0,
\end{equation*}
which is a contradiction.
We conclude that $(\lambda_X)^p\neq 0$, hence $\Phi_X$ is a $p$-sweepout.

\end{proof}

\begin{definition}
Given $p\in\naturals$, the $p$-\textit{width} of $(M,g)$ is defined as
\begin{equation*}
   \width_p(M,g)=\inf_{(\Phi,X)\in\sweepout_p(M)}\sup\{\mass(\Phi(x)):x\in X\}.
\end{equation*}
For a fixed $m\in\naturals$ we define the \textit{restricted} $p$-\textit{width} as
\begin{equation*}
   \mwidth{m}_p(M,g)=\inf_{(\Phi,X)\in\sweepout_p^{(m)}(M,g)}\sup\{\mass(\Phi(x)):x\in X\},
\end{equation*}
where we only consider $p$-sweepouts whose domain is contained in a cube $I^m$ of fixed dimension $m$.
%When the codimension is one, $k=n$, we simply write $\width_p$ and $\width^{(m)}_p$.
\end{definition}

\begin{remark}
Note that for any $p$-sweepout $\Phi$ it is true that $\width_p\leq \lwidth[\Phi]$.
However, it is not known in general whether it is always possible to have equality for some sweepout.
%Having such optimal sweepout is a very strong property because it would give us for free a $p$-parameter family of perturbations of the critical varifold.
It is trivial from the definition that we can always find a sequence of $p$-sweepouts $S$ that satisfies $\lwidth(S)=\width_p$.
Nevertheless that is not very useful because we must allow the ambient cubical domain $I^{m_i}$ to vary and in this case Pitts' combinatorial construction does not work (see \cite[\textsection 4.10]{jpitts1}).
\end{remark}

%\begin{remark}\label{index bound remark}
%We would like to state here a result by F.C. Marques and A. Neves that was communicated in conferences but at the moment it has not yet been published \cite{fmarques-aneves3}.
%
%They prove, in particular, that the minimal hypersurface $\Sigma=\supp\|V\|$ given by Theorem \ref{existence of minmax hypersurface} must have $\textup{index}(\Sigma)\leq \dim(X)$.
%\end{remark}

% lusternik-schnirelmann category of critical sets
\section{Category of a Critical Set}
In this section we are going to explain the notion of $\mathcal{A}$-category of a set, which is a generalization of the Lusternik-Schnirelmann Category (see \cite{mclapp-dpuppe1}).
We will use this alternate notion of category to study the topology of the space of min-max minimal hypersurfaces.

Let us briefly explain the reason for not using the classical definition.
We are working with the space $\ncycles$ and we want to obtain a lower bound on the category of the set of minimal hypersurfaces.
Since $\ncycles$ might not be locally contractible this result could be useless as a covering of the critical set by contractible sets might not exist.
%However, it follows from \cite[Theorem 8.2]{falmgren1} that $\ncycles$ is locally $q$-connected for each $q\in\naturals$, which indicates that the $q$-connected Category is the right choice.

\begin{definition}[{\cite[1.1]{mclapp-dpuppe1}}]
Let $X$ be a topological space and $\mathcal{A}$ a non-empty collection of non-empty subsets of $X$.
We say that a subset $U\subset X$ is \textit{deformable} to $\mathcal{A}$ in $X$ if there exists $A\in\mathcal{A}$ and an homotopy $h_t:U\rightarrow X$, $t\in[0,1]$, such that $h_0=\iota_U$ is the inclusion map and $h_1(U)\subset A$.

%More generally, for a continuous maps $f:X\rightarrow Y$ we say that $f\rest{X'}$ \textit{factors through} $A$ if there exist maps $\alpha$, $\beta$ as before such that $f\circ\beta\circ\alpha$ is homotopic to $f\rest{X'}$.
%That is, $X'$ is deformable to $A$ if and only if the inclusion map factors through $A$.

A finite covering $\{U_1,\ldots,U_k\}$ of open sets such that each $U_j$ is deformable to $\mathcal{A}$ in $X$ is called a $\mathcal{A}$\textit{-categorical covering}.
Given a subspace $Y\subset X$ we define the $\mathcal{A}$-category of $Y$ as the smallest cardinality $k$ of such covering and we write $\Acat_X(Y)=k$.
If no such covering exists we put $\Acat_X(Y)=\infty$.
\end{definition}
\begin{remark}
The $\mathcal{A}$-category of a subset $Y\subset X$ is relative to the ambient space $X$.
In general the relative category is different from the intrinsic category (seeing $Y$ as a subset of itself).
%This happens for the Lusternik-Schnirelmann category as well.
\end{remark}

In our case we consider the collection
\begin{equation*}
   \begin{aligned}
      \mathcal{N}_1=\{N\subset \cycles_n(M;\integer_2):& U \text{ is open in the flat topology and } \\
                                                       & (\iota_U)_*:\pi_1(U)\rightarrow\pi_1(\ncycles) \text{ is trivial }\}.
   \end{aligned}
\end{equation*}
It follows from \cite[Theorem 8.2]{falmgren1} that for every neighborhood of $0\in\ncycles$ contains an element $U\in\mathcal{N}_1$ such that $0\in U$.
%For each $k$ we will make use of the $\mathcal{N}_{1}$-category instead of the Lusternik-Schnirelmann version.
\begin{remark}
Since $\pi_{l}(\cycles_n(M;\integer_2))=0$ for all $l>1$, it follows that the induced map $(\iota_U)_*$ is trivial for all $l\in\naturals$ whenever $U\in\mathcal{N}_1$.
\end{remark}

We summarize some trivial properties that we will be necessary for our applications.

\begin{proposition}\label{properties of category}
Let $\mathcal{N}_1$ be defined as above. For any subset $Y\subset \ncycles$ the following holds:
\begin{enumerate}[(i)]
   \item $\catone_{\cycles_n}(Y)=1$ if and only if $Y$ is contained in an open set $U$ such that $(\iota_U)_*:H_1(U)\rightarrow H_1(X)$ is zero;
   \item if $W\subset Y$ then $\catone_{\cycles_n}(W)\leq \catone_{\cycles_n}(Y)$;
   \item if $K\subset \ncycles$ is compact then $\catone_{\cycles_n}(K)<\infty$.
\end{enumerate}
\end{proposition}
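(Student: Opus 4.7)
The plan is to unpack the definitions and exploit the single ambient fact $\pi_1(\ncycles)=\integer_2$: this group has only the identity automorphism, and any homomorphism into it factors through $H_1(-;\integer_2)$ via abelianization and change of coefficients.

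For (i), the forward direction is read off from the definition of deformability. If $\catone_{\cycles_n}(Y)=1$ there is an open $U\supset Y$ and a homotopy $h_t:U\to\ncycles$ with $h_0=\iota_U$ and $h_1(U)\subset N$ for some $N\in\mathcal{N}_1$, so $(\iota_U)_*$ on $\pi_1$ factors through $(\iota_N)_*=0$; by the factorization observation above, the same vanishing transports to $H_1(-;\integer_2)$. The converse reverses this chain: if $(\iota_U)_*:H_1(U;\integer_2)\to H_1(\ncycles;\integer_2)=\integer_2$ is zero, the factorization $\pi_1(U)\to H_1(U;\integer_2)\to\integer_2=\pi_1(\ncycles)$ forces $(\iota_U)_*$ to vanish on $\pi_1$, so $U$ itself lies in $\mathcal{N}_1$ and is trivially deformable to itself by the identity homotopy.

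Statement (ii) is immediate: any $\mathcal{N}_1$-categorical covering $\{U_1,\ldots,U_k\}$ of $Y$ is automatically an $\mathcal{N}_1$-categorical covering of any subset $W\subset Y$, so the corresponding infimum can only be larger for $Y$.

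For (iii) the key step, and the point where the main obstacle sits, is extending the ``$\mathcal{N}_1$-neighborhood at $0$'' supplied by \cite[Theorem 8.2]{falmgren1} to every point of $\ncycles$. Given $T\in\ncycles$, consider the translation $\tau_T:S\mapsto S+T$: subadditivity of $\flatnorm$ and $(A+T)-(B+T)=A-B\pmod{2}$ show that $\tau_T$ is an isometry in the flat metric, hence a homeomorphism, and it is its own inverse because coefficients are in $\integer_2$. Since $\pi_1(\ncycles)=\integer_2$ admits only the identity automorphism, $\tau_{T*}$ acts as the identity on $\pi_1(\ncycles)$ after the canonical basepoint change (legitimate because the target is abelian). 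Choosing an open $U\in\mathcal{N}_1$ containing $0$ and applying the naturality square $\iota_{U+T}\circ(\tau_T|_U)=\tau_T\circ\iota_U$ yields $(\iota_{U+T})_*=0$ on $\pi_1$, so $U+T\in\mathcal{N}_1$ is an open neighborhood of $T$. Compactness of $K$ then extracts a finite subcover of such translated neighborhoods, giving a finite $\mathcal{N}_1$-categorical covering and hence $\catone_{\cycles_n}(K)<\infty$.
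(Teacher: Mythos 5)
Your proof is correct and follows the same route as the paper, whose own proof is a terse ``(ii) and (iii) are straightforward from the definition and the fact that $\mathcal{N}_1$ defines a local neighborhood system.''\ Your translation argument via $\tau_T$ is exactly the content needed to upgrade the paper's explicit verification (cited from Almgren's Theorem 8.2) that $\mathcal{N}_1$-sets fill out neighborhoods of $0$ into the claim that they do so at every $T\in\ncycles$, and your Hurewicz/coefficient-change factorization for (i) matches the paper's one-line appeal to surjectivity and naturality of the Hurewicz map.
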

\begin{proof}
\textbf{(i)}:It follows from the definition that there must be a set $U$ such that the maps induced by the inclusion on the fundamental group is trivial.
Simply note that the Hurewicz homomorphism is surjective in dimension $1$ and natural, so the induced map in homology must also be trivial.

\noindent \textbf{(ii)} and \textbf{(iii)} are straightforward from the definition and the fact that $\mathcal{N}_1$ defines a local neighborhood system in $\cycles_n(M;\integer_2)$.
\end{proof}

Before proceeding we explain how the $\mathcal{N}_1$-category relates to the Lusternik-Schnirelmann category.

\begin{definition}
   Let $\Lambda$ be a manifold and $\mathcal{P}$ be the collection consisting of a single one-point set.
   We define the Lusternik-Schnirelmann category of $\Lambda$ as
   \begin{equation*}
      \lusternik(\Lambda)=\mathcal{P}\textup{-cat}_{\Lambda}(\Lambda).
   \end{equation*}
\end{definition}

This definition is the same as in \cite{ocorneaetal1} except that we start counting the size of a covering from $1$ instead of $0$ (see also \cite[\textsection 1.2(1)]{mclapp-dpuppe1}).
So, every result holds simply by subtracting $1$ from their definition.

\begin{lemma}\label{contraction lemma}
Let $\Lambda\subset \ncycles$ be homeomorphic to a complete manifold.
If $K\subset \Lambda$ is a compact set in $\Lambda$, then there exists $\varepsilon_0>0$ such that every closed curve $\gamma(S^1)\subset B^{\flatnorm}(K,\varepsilon_0)$ is flat homotopic in $\ncycles$ to a curve $\tilde{\gamma}(S^1)\subset K$.
\end{lemma}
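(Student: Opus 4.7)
The plan is to cover $K$ by finitely many flat-open sets that are simultaneously in $\mathcal{N}_1$ (so every loop inside is null-homotopic in $\ncycles$) and whose intersection with $\Lambda$ is path-connected, then split $\gamma$ into small arcs each confined to one such set and homotope arc by arc.

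First, I would establish that every $p\in\ncycles$ admits a neighborhood basis in $\mathcal{N}_1$. This follows from the $\mathcal{N}_1$-basis at $0$ noted just before the lemma, by translating along a path from $0$ to $p$ inside the path-connected topological group $(\ncycles,+)$ and using continuity of addition. Combined with the local Euclidean structure of $\Lambda$ in the flat topology, for each $p\in K$ this yields an open set $U_p\subset\ncycles$ with $U_p\in\mathcal{N}_1$ and $U_p\cap\Lambda$ a coordinate ball of $\Lambda$ (in particular path-connected). Compactness of $K$ furnishes a finite subcover $U_1,\dots,U_L$, and a standard Lebesgue-number argument produces $\varepsilon_0>0$ such that every flat ball of radius $2\varepsilon_0$ centered at a point of $B^{\flatnorm}(K,\varepsilon_0)$ is contained in some single $U_i$.

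Given $\gamma:S^1\to B^{\flatnorm}(K,\varepsilon_0)$, by uniform continuity I would subdivide $0=t_0<t_1<\dots<t_N=1$ (with $t_N\sim t_0$) so that each arc $\gamma([t_{j-1},t_j])$ has flat diameter less than $\varepsilon_0$; each arc then lies in some $U_{i_j}$. For each $j$, choose $q_j\in K$ with $\flatnorm(\gamma(t_j),q_j)<\varepsilon_0$, which forces $q_{j-1},q_j\in U_{i_j}$. Using that $U_{i_j}\cap\Lambda$ is path-connected (and, as discussed below, arranging the connection inside $K$), select a path $\alpha_j:[t_{j-1},t_j]\to K\cap U_{i_j}$ from $q_{j-1}$ to $q_j$ and concatenate to obtain $\tilde\gamma:S^1\to K$. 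For each $j$ the loop $\gamma\rest{[t_{j-1},t_j]}\cdot\alpha_j^{-1}$ lies in $U_{i_j}\in\mathcal{N}_1$, hence is flat-null-homotopic in $\ncycles$; gluing these null-homotopies along the $t_j$'s yields the required flat homotopy from $\gamma$ to $\tilde\gamma$.

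The main obstacle is precisely this last arrangement: forcing the $\alpha_j$ to lie inside $K$ rather than only in the ambient $\Lambda$. Since $K\cap U_{i_j}$ need not be path-connected for arbitrary compact $K$, the resolution should exploit the structure of $K$ as a compact subset of the locally Euclidean $\Lambda$---after further shrinking $\varepsilon_0$, a Lebesgue-type argument arranges that consecutive near-points $q_{j-1},q_j$ lie in a common path component of $K$ within a single coordinate chart of $\Lambda$, so that a short path inside $K$ joining them is available.
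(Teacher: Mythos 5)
Your overall strategy (cover, subdivide, homotope arc by arc using that small loops in $\ncycles$ are nullhomotopic) is in the same spirit as the paper's, and the ingredients you use in place of the paper's are reasonable: where the paper invokes Almgren's theorem directly to get an $\eta_0 > 0$ below which closed curves vanish, and uses a Riemannian metric $d_\Lambda$ on $\Lambda$ with its convexity radius, you use the $\mathcal{N}_1$-neighborhood basis (obtained from the basis at $0$ via the topological group structure) together with coordinate charts of $\Lambda$. Those substitutions are fine.

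The genuine gap is the one you flag yourself, and the proposed fix does not close it. You try to choose the connecting arcs $\alpha_j$ inside $K$ from the outset, which requires that nearby points $q_{j-1},q_j \in K$ be joinable by a short path \emph{in} $K$. Shrinking $\varepsilon_0$ cannot arrange this: it is a local path-connectedness property of $K$, not a Lebesgue-number property, and an arbitrary compact subset of a manifold need not be locally path-connected at any scale. For instance $K=\{0\}\cup\{1/n : n\in\naturals\}$ embedded in a chart of $\Lambda$ is compact, yet no two distinct points of $K$ lie in a common path component of $K$, so no refinement of the cover produces the paths you need. The paper avoids this by \emph{not} trying to build $\tilde\gamma$ inside $K$ immediately: it joins the chosen points $T_i \in K$ by minimizing geodesics of $\Lambda$ (which may well leave $K$), obtains a piecewise-geodesic curve $\hat\gamma$ in a tubular-type neighborhood $B_\Lambda(K,\delta_0)$, proves $\gamma$ is flat homotopic to $\hat\gamma$ via Almgren's small-loop lemma, and only at the end pushes $\hat\gamma$ into $K$ as a separate step using the convexity radius and a finite cover by convex balls. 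If you want to salvage your version, you should likewise take $\alpha_j$ to be a path in $\Lambda$ (not $K$) joining $q_{j-1}$ to $q_j$ inside the coordinate ball $U_{i_j}\cap\Lambda$ — which is always available since that set is a ball — prove $\gamma\sim\tilde\gamma$ with $\tilde\gamma\subset\Lambda$, and then address the passage from $\Lambda$ to $K$ separately, as the paper does.
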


\begin{proof}
Since $\Lambda$ is homeomorphic to a manifold we can define a Riemannian metric in $\Lambda$ whose distance we denote by $d_\Lambda$.
It follows that the flat topology induced by $\ncycles$ and the topology induced by $d_\Lambda$ coincide.
Therefore, given $\delta>0$, by compactness of $K$ we can find $C_0=C_0(K,\delta)$ such that for every $T,S\in K$ we have
\begin{equation*}
   \flatnorm(T-S)<C_0 \Rightarrow d_{\Lambda}(T,S)<\delta.
\end{equation*}
Now, \cite[Theorem 8.2]{falmgren1} implies that there exists $\eta_0=\eta_0(M)>0$ such that any closed curve $\alpha:[0,1]\rightarrow\ncycles$ with $\flatnorm(\alpha(t))<\eta_0$ for all $t\in[0,1]$ is nullhomotopic in $\ncycles$.
Put $\delta_0>0$ such that $3\delta_0<\min\{\textup{convex}(K),\eta_0\}$, where $\textup{convex}(K)>0$ denotes the convexity radius of $K$ defined by the Riemannian metric on $\Lambda$.
Finally, choose $\varepsilon_0>0$ such that $3\varepsilon_0<\min\{C_0,\eta_0\}$.

Take a closed curve $\gamma:[0,1]\rightarrow\ncycles$ such that 
\begin{equation*}
   \flatnorm(\gamma(t),K)=\inf\{\flatnorm(\gamma(t)-S):S\in K\}<\varepsilon_0
\end{equation*}
for all $t\in[0,1]$.
Choose a partition $0=t_0<\ldots<t_n=1$ satisfying
\begin{equation*}
   \flatnorm(\gamma(s)-\gamma(s'))<\varepsilon_0
\end{equation*}
for all $s,s'\in[t_i,t_{i+1}]$ and $s,s'\in[0,t_1]\cup[t_{n-1},1]$ (because $\gamma$ is closed, $\gamma(0)=\gamma(1)$).
For each $i=0,\ldots,n-1$ we can choose a non-unique $T_i\in K$ that realizes the flat distance to $K$ and $T_n=T_0$.
Then,
\begin{equation*}
   \flatnorm(\gamma(t_i)-T_i)=\flatnorm(\gamma(t_i),K) < \varepsilon_0,
\end{equation*}
for $i=0,\ldots,n$.

It follows that $\flatnorm(T_i-T_{i+1})<2\varepsilon_0<C_0$ so that $d_\Lambda(T_i,T_{i+1})<\delta_0<\textup{convex}(K)$ for all $i=0,\ldots,n-1$.
Now, for each $i=0,\ldots,n-1$ we can take a minimising geodesic $\alpha_i:[0,1]\rightarrow \Lambda$ connecting $T_i$ to $T_{i+1}$ such that
\begin{equation*}
   d_{\Lambda}(\gamma(t),K)<d_{\Lambda}(\gamma(t),T_i)<\delta_0
\end{equation*}
for all $t\in[0,1]$.
That is, $\alpha_i([0,1])\subset B_\Lambda(K,\delta_0)$, where $B_\Lambda(K,\delta_0)=\{S\in\Lambda:\,d_\Lambda(S,K)<\delta_0\}$.
In particular, we have for each $i=0,\ldots,n-1$
\begin{equation*}
   \begin{aligned}
      \flatnorm(\alpha_i(t)-\gamma(s)) & \leq \flatnorm(\alpha_i(t)-T_i) + \flatnorm(T_i-\gamma(s))\\
                                       & \leq d_{\Lambda}(\alpha_i(t),T_i) + \flatnorm(T_i-\gamma(t_i)) + \flatnorm(\gamma(t_i)-\gamma(s))\\
                                       & < \delta_0 + \varepsilon_0 + \varepsilon_0\\
                                       & < \eta_0
   \end{aligned}
\end{equation*}
for all $t\in[0,1]$ and $s\in[t_i,t_{i+1}]$.

At last, we define $\hat\gamma:[0,1]\rightarrow B_{\Lambda}(K,\delta_0)$ as 
\begin{equation*}
   \gamma(s)=\alpha_i\left( \frac{s-t_i}{t_{i+1}-t_i} \right),\text{ if } s\in[t_i,t_{i+1}].
\end{equation*}
By our construction it follows that $\hat \gamma$ is a closed curve and
\begin{equation*}
   \flatnorm(\hat\gamma(s)-\gamma(s))<\eta_0
\end{equation*}
for all $s\in[0,1]$.
Then, $\hat\gamma-\gamma$ is a nullhomotopic curve in $\ncycles$.
In other words, $\hat\gamma$ is flat homotopic to $\gamma$.

To finish the proof we note that $B_{\Lambda}(K,\delta_0)$ is contained in a finite union of convex balls covering $K$ so we can use the unique minimising geodesics to retract $\hat\gamma$ to a closed curve $\tilde\gamma$ inside $K$.
\end{proof}

We now explain the relation between the Lusternik-Schnirelmann category and $\mathcal{N}_1$-category.
This will follow directly from the previous lemma.

\begin{proposition}
Let $\Lambda\subset \ncycles$ be homeomorphic to a closed manifold.
Then $\lusternik(\Lambda)\geq\catone_{\cycles_n}(\Lambda)$.
\end{proposition}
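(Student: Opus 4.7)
The plan is to thicken a Lusternik--Schnirelmann covering of $\Lambda$ into an $\mathcal{N}_1$-categorical covering inside $\ncycles$, with Lemma \ref{contraction lemma} providing the crucial bridge between curves in a neighborhood of a compact set and curves in the set itself.

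First I would set $k=\lusternik(\Lambda)$ and fix a covering $\{V_1,\ldots,V_k\}$ of $\Lambda$ by open sets $V_i\subset\Lambda$ that are $\mathcal{P}$-deformable in $\Lambda$; that is, for each $i$ there is a homotopy $h^i_t:V_i\to\Lambda$ with $h^i_0=\iota_{V_i}$ and $h^i_1$ constant. Since $\Lambda$ is a closed manifold it is a compact metrizable, hence normal, space, so the cover admits a shrinking $\{V_1',\ldots,V_k'\}$ of $\Lambda$ with $K_i:=\overline{V_i'}\subset V_i$ compact. Applying Lemma \ref{contraction lemma} to each $K_i\subset\Lambda$ produces $\varepsilon_i>0$ such that every closed curve contained in the flat ball $B^{\flatnorm}(K_i,\varepsilon_i)$ is flat-homotopic in $\ncycles$ to a closed curve supported in $K_i$. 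Set $U_i:=B^{\flatnorm}(K_i,\varepsilon_i)$; these are open in the flat topology on $\ncycles$ and, since $V_i'\subset K_i\subset U_i$, they still cover $\Lambda$.

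The remaining step is to show $U_i\in\mathcal{N}_1$. Given any loop $\gamma:S^1\to U_i$, the choice of $\varepsilon_i$ yields a flat homotopy in $\ncycles$ from $\gamma$ to some $\tilde\gamma:S^1\to K_i\subset V_i$. The composition $h^i_t\circ\tilde\gamma$ is then a nullhomotopy of $\tilde\gamma$ in $\Lambda$, and composing with $\Lambda\hookrightarrow\ncycles$ shows that $\gamma$ is nullhomotopic in $\ncycles$. Thus $(\iota_{U_i})_*:\pi_1(U_i)\to\pi_1(\ncycles)$ is trivial, so $U_i\in\mathcal{N}_1$, and the constant homotopy witnesses $U_i$ as deformable to the element $U_i\in\mathcal{N}_1$. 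Consequently $\{U_1,\ldots,U_k\}$ is an $\mathcal{N}_1$-categorical covering of $\Lambda$, giving $\catone_{\cycles_n}(\Lambda)\leq k=\lusternik(\Lambda)$.

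The real content of the argument is Lemma \ref{contraction lemma}, which I would expect to be the only nontrivial input; once it is in hand the step from a contraction of $V_i$ inside $\Lambda$ to a triviality of $\pi_1(U_i)\to\pi_1(\ncycles)$ is formal. The one point requiring a little care is the shrinking of the LS covering, which I would simply justify by invoking the standard shrinking lemma for finite open covers of normal spaces applied to the compact manifold $\Lambda$.
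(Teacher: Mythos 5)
Your proof is correct and follows essentially the same strategy as the paper: extract compact sets from an LS-categorical covering, thicken them via Lemma~\ref{contraction lemma} into flat-open sets $U_i$, and use the contractibility in $\Lambda$ to conclude each $U_i\in\mathcal{N}_1$. The only cosmetic difference is how you obtain the compact sets: you shrink the open LS cover and take closures, while the paper instead invokes the ANR property of $\Lambda$ to pass directly to the closed-set version of LS category (citing Cornea et al., Proposition~1.10); both devices are standard and interchangeable here.
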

\begin{proof}
First we point out that $\Lambda$ is, in particular, a normal Absolute Neighbourhood Retract (simply embed it into $\real^N$ for large $N$ and taking a tubular neighbourhood).
So, it follows from \cite[Proposition 1.10]{ocorneaetal1} that the definition of Lusternik-Schnirelmann category by closed sets and open sets coincide.

Suppose $\lusternik(\Lambda)=l$, as we mentioned above, we can take $K_1,\ldots,K_l\subset\Lambda$ closed sets all of which is contractible in $\Lambda$ and $\Lambda= K_1\cup\ldots\cup K_l$.
By assumption $\Lambda$ is closed so each $K_i$ is compact.

Let $U_i=B^{\flatnorm}(K_i,\varepsilon_i)$ be given by Lemma \ref{contraction lemma}.
Then, any closed curve in $U_i$ is flat homotopic in $\ncycles$ to a closed curve in $K_i$.
Since $K_i$ is contractible, we conclude that every closed curve in $U_i$  is nullhomotopic in $\ncycles$, that is, $U_i\in\mathcal{N}_1$.
In other words, $\{U_1,\ldots,U_l\}$ define a $\mathcal{N}_1$-categorical covering, thus $\catone_{\cycles_n}(\Lambda)\leq l$.
\end{proof}

One cannot prove the reversed inequality because the homotopies above mentioned might leave the set $\Lambda$ and a $\lusternik$-categorical covering is required to be contractible in $\Lambda$.
As an example, consider $\Lambda$ a two-point set.
Its intrinsic Lusternik-Schnirelmann category is $2$ but any inclusion in $\ncycles$ would give $\mathcal{N}_1$-category $1$.

The motivation for our main result in this section is to try to obtain a result similar to \cite[2.3(iii)]{mclapp-dpuppe1} in our weaker setting, where we don't have Banach manifolds or a smooth functional.
One could hope to mimic their proof but it is not clear that the critical values $c_i$, defined in their paper, correspond to the width $\width_i$.
It might be possible to show that $c_i$ corresponds to a critical value even in our setting, but even so, nothing is known about its asymptotic behavior, which is a crucial property of $\width_i$.
Nevertheless, we have found that it is possible to obtain information about the topology of the critical set measured by its $\mathcal{N}_1$-category.
To do so we must know how the existence of sweepouts contribute to the $\mathcal{N}_1$-category of a set.
The main property that establishes this relation is given by the next lemma.

\begin{lemma}\label{category sweepout}
Let $K\subset\cycles_n(M;\integer_2)$ be a closed set with $\catone(K)\leq N$. There exists an open set $U\subset \cycles_n(M;\integer_2)$ with $K\subset U$ satisfying the following property:

If $X$ is a cubical subcomplex and $\Phi:X\rightarrow \cycles_n(M;\integer_2)$ is a flat continuous map with $\Phi(X)\subset U$ then $\Phi$ is not a $N$-sweepout.
\end{lemma}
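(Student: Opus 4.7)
The approach is to use an $\mathcal{N}_1$-categorical cover of $K$ to produce, after composition with $\Phi$ and cell refinement, a cover of the domain $X$ by cubical subcomplexes on each of which $\Phi$ fails to be a $1$-sweepout; one then iterates the Vanishing Lemma. Concretely, since $\catone(K) \leq N$, pick a flat-open cover $\{U_1,\dots,U_N\}$ of $K$ with each $U_i$ deformable in $\ncycles$ to some $A_i\in\mathcal{N}_1$, and set $U=U_1\cup\cdots\cup U_N$. Because homotopic maps into $\ncycles$ induce the same map on $\pi_1$, and the inclusion $\iota_{U_i}$ is homotopic to one factoring through $A_i$ (on which $\pi_1$ vanishes on inclusion), the map $(\iota_{U_i})_*\colon\pi_1(U_i)\to\pi_1(\ncycles)$ is itself trivial.

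Now given $\Phi\colon X\to\ncycles$ flat continuous with $\Phi(X)\subset U$ and $X\subset I^m$ a cubical subcomplex, the preimages $X_i:=\Phi^{-1}(U_i)$ are an open cover of the compact metric space $X$. By the Lebesgue number lemma, replacing $X$ by a refinement $X(l)$ for sufficiently large $l$, every cell of $X(l)$ is contained in some $X_i$; I let $Y_i$ be the cubical subcomplex of $X(l)$ formed by the union of all cells contained in $X_i$. Then $\{Y_1,\dots,Y_N\}$ is a cover of $X$ by cubical subcomplexes of $I^m$ with $\Phi(Y_i)\subset U_i$.

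Next, I verify each $\Phi|_{Y_i}$ is not a $1$-sweepout. The map $\Phi|_{Y_i}$ factors as $Y_i\to U_i\xrightarrow{\iota_{U_i}}\ncycles$, so $(\Phi|_{Y_i})^*\bar\lambda$ is the pullback of $(\iota_{U_i})^*\bar\lambda\in H^1(U_i;\integer_2)$. Triviality of $(\iota_{U_i})_*$ on $\pi_1$ implies triviality on $H_1(\cdot;\integer)$, by naturality and surjectivity of the Hurewicz homomorphism, and hence triviality of $(\iota_{U_i})^*$ on $H^1(\cdot;\integer_2)$ via the universal coefficient theorem (the $\textup{Ext}$ term vanishes since $H_0$ is free). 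Therefore $(\Phi|_{Y_i})^*\bar\lambda=0$ in $H^1(Y_i;\integer_2)$.

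Finally, I conclude by iterating the Vanishing Lemma. Its contrapositive asserts that if $\Phi|_A$ is not a $p$-sweepout and $\Phi|_B$ is not an $l$-sweepout for cubical subcomplexes $A,B\subset I^m$, then $\Phi|_{A\cup B}$ is not a $(p+l)$-sweepout. Pairing $Y_1$ against $Y_2\cup\cdots\cup Y_N$ and inducting on $N$, one obtains that $\Phi$ on $X=Y_1\cup\cdots\cup Y_N$ is not an $N$-sweepout. The main technical step to handle carefully is the cell-refinement argument that converts the open cover $\{X_i\}$ of $X$ into a cover by cubical subcomplexes so that the Vanishing Lemma applies in its stated form; the cohomological input and the inductive iteration are then formal.
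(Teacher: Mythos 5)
Your proof is correct and follows the same overall strategy as the paper: take an $\mathcal{N}_1$-categorical cover $\{U_1,\dots,U_N\}$ of $K$, observe that none of the $U_i$ admits a $1$-sweepout, decompose the domain of a map into $U=\bigcup U_i$ accordingly, and iterate the Vanishing Lemma. The technical execution differs in one respect worth noting: the paper proceeds by induction on $N$, building a nested shrinking $U' \cup U_N'$ of the cover and splitting $X$ into $X_1=\overline{\Phi^{-1}(U')}$ and $X_2=\overline{X\setminus X_1}$, which are merely closed subsets and not obviously cubical subcomplexes as the Vanishing Lemma requires. You instead refine the cell structure of $X$ via the Lebesgue number lemma so that each cell lands in some $\Phi^{-1}(U_i)$, producing honest cubical subcomplexes $Y_1,\dots,Y_N$ with $\Phi(Y_i)\subset U_i$; this explicitly supplies the cubical-subcomplex hypothesis that the paper's proof glosses over. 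Your cohomological justification that each $\Phi|_{Y_i}$ kills $\bar\lambda$ (Hurewicz plus universal coefficients, using that the $\mathcal{N}_1$-deformability of $U_i$ forces $(\iota_{U_i})_*$ to vanish on $\pi_1$) is also spelled out more fully than in the paper, where it is implicit in the $N=1$ base case. Both proofs conclude by the same inductive application of the contrapositive of the Vanishing Lemma, so the mathematical content is the same; your write-up is the more careful of the two on the refinement point.
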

\begin{proof}
We prove it by induction.
If $N=1$ then $K$ is contained in an open set $U\subset\cycles_n(M;\integer_2)$ such that every map $f:S^{1}\rightarrow U$ is nullhomotopic in $\cycles_n(M;\integer_2)$.
So it cannot be a $1$-sweepout.

Assume the result is valid for $N-1$ and suppose $\catone(K)\leq N$.
There exists $U_1,\ldots,U_N$ each of which does not contain $1$-sweepouts and $K\subset U_1\cup\ldots\cup U_N$.
It is clear that $K'=K\setminus U_N$ has $\catone(K')\leq N-1$ so we can take $U'$ with $K'\subset U'$ that doesn't contain $(N-1)$-sweepouts.
We can also assume that $\overline{U'}\subset U_1\cup\ldots\cup U_{N-1}$.
Let $U=U'\cup U'_N$, where $U'_N$ is such that $K\setminus U'\subset U'_N$ and $\overline{U'_N}\subset U_N$.

Now, for $\Phi:X\rightarrow U$ let $X_1=\overline{\{x\in X:\Phi(x)\in U'\}}$ and $X_2=\overline{X\setminus X_1}$. Note that if either $X_1$ or $X_2$ are empty then the result follows.
By the induction hypothesis $\Phi\rest{X_1}$ is not a $(N-1)$-sweepout and, as in the first step, $\Phi\rest{X_2}$ is not a $1$-sweepout.
Thus the Vanishing Lemma \ref{vanishing lemma} implies that $\Phi$ cannot be a $N$-sweepout.
\end{proof}

%Henceforth we shall restrict ourselves to the codimension one case, $k=n$.
Let us denote the set of min-max minimal hypersurfaces as
\begin{equation*}
   \begin{aligned}
      \minimal(M,g) = \{V\in\integral_n(M):\, & \supp \|V\| \text{ is a smooth embedded minimal}\\ 
                                              & \text{hypersurface and } V\in\critical(S) \text{ for some sequence of flat}\\
                                              & \text{continuous maps with no concentration of mass}\}
   \end{aligned}
\end{equation*}
and its associated cycles
\begin{equation*}
   \begin{aligned}
      \minimalcycles = \{T\in\ncycles:\, & \supp T \text{ is a smooth embedded minimal}\\
                                                      & \text{ hypersurface or } T=0 \}.
   \end{aligned}
\end{equation*}
For $\beta>0$ we denote $\minimal_\beta=\{V\in\minimal:\,\|V\|(M)\leq \beta\}$ and similarly $\minimalcycles_\beta=\{T\in\minimalcycles:\,\mass(T)\leq \beta\}$

The next Lemma is a direct application of the Constancy Theorem and lower semicontinuity of the mass (see \cite[Claim 6.2]{fmarques-aneves2}).
\begin{lemma}\label{minmax flat approximation}
Fix $\beta>0$, for every open set $U\subset\ncycles$, with $\minimalcycles_\beta\subset U$, there exists $\varepsilon_0>0$ such that for any $T\in\ncycles$
\begin{equation*}
   \fmetric(|T|,\minimal_\beta)<\varepsilon_0 \, \Rightarrow\, T\in U.
\end{equation*}
\end{lemma}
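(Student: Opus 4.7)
The plan is to argue by contradiction. Suppose no such $\varepsilon_0$ exists; then there is a sequence $T_i\in\ncycles$ with $T_i\notin U$ and a corresponding sequence $V_i\in\minimal_\beta$ such that $\fmetric(|T_i|,V_i)\to 0$.

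First I would extract convergent subsequences. Since $\fmetric$-convergence of varifolds entails mass convergence and $\|V_i\|(M)\leq\beta$, the masses $\mass(T_i)$ are uniformly bounded. By the Federer--Fleming compactness theorem for mod $2$ flat cycles of bounded mass I may pass to a subsequence along which $T_i\to T$ in the flat topology for some $T\in\ncycles$, and lower semicontinuity of mass yields $\mass(T)\leq\beta$. By the standard compactness of integral varifolds with bounded mass, a further subsequence of $V_i$ converges as varifolds to some stationary $V\in\integral_n(M)$, and the triangle inequality in the varifold $\fmetric$-metric then forces $|T_i|\to V$ as varifolds. Weak convergence combined with lower semicontinuity of support gives $\supp T\subset\supp\|V\|$.

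The heart of the argument is the Constancy Theorem step showing $T\in\minimalcycles_\beta$. Each $V_i$ has smooth embedded minimal support with area at most $\beta$, and in the range $n+1\leq 7$, a classical structural result for varifold limits of smooth embedded minimal hypersurfaces ensures that the regular part of $\supp\|V\|$ is a smooth embedded minimal hypersurface and the singular part has negligible $\haus^n$-measure. Thus $T$ is a mod $2$ cycle whose support is contained in the union of the smooth embedded minimal components $\Sigma^{(1)},\ldots,\Sigma^{(k)}$ of the regular part. The mod $2$ Constancy Theorem, applied component by component, then forces $T$ to equal either $0$ or $[\Sigma^{(j)}]$ on each component and to have no mass on the singular set. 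Consequently $\supp T$ is itself a smooth embedded minimal hypersurface (or $T=0$) with $\mass(T)\leq\beta$, so $T\in\minimalcycles_\beta\subset U$. Since $U$ is open in the flat topology and $T_i\to T$ in flat, we get $T_i\in U$ for sufficiently large $i$, contradicting the assumption.

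The step I expect to require the most care is the regularity of $\supp\|V\|$ that makes the Constancy Theorem directly applicable. Once the smoothness of the regular part is in hand, the mod $2$ structure of the cycles $T_i$ together with the Constancy Theorem closes the argument essentially for free, exactly as in the analogous Claim 6.2 of \cite{fmarques-aneves2} whose proof strategy I would mirror.
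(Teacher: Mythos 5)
Your overall architecture — argue by contradiction, extract a flat limit $T$ of the $T_i$ via Federer--Fleming, extract a varifold limit $V$ of the $V_i$, force $|T_i|\to V$ by the triangle inequality, deduce $\supp T\subset\supp\|V\|$, invoke the Constancy Theorem, and then use openness of $U$ to finish — is exactly the strategy behind the cited Claim 6.2 of \cite{fmarques-aneves2}, and the paper's one-line proof is just a pointer to that argument. The bookkeeping steps (mass lower semicontinuity, compactness for bounded-mass mod $2$ cycles, compactness for stationary integral varifolds of bounded mass) are all correct.

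The gap is in the step you yourself flag as delicate. You assert that, in dimensions $n+1\leq 7$, ``a classical structural result for varifold limits of smooth embedded minimal hypersurfaces ensures that the regular part of $\supp\|V\|$ is a smooth embedded minimal hypersurface and the singular part has negligible $\haus^n$-measure.'' No such result holds under the hypotheses available to you here, which are only an area bound. Schoen--Simon regularity for the limit requires stability; Sharp's compactness requires a uniform index bound; the Pitts regularity used elsewhere in the paper requires the limit to be almost-minimizing in annuli. None of these is built into the definition of $\minimal_\beta$ as you are using it, and a varifold limit of smooth embedded minimal hypersurfaces with bounded area alone can in principle have a large singular set — there is no general theorem that its singular set has $\haus^n$-measure zero, let alone that the Constancy Theorem can be applied component-by-component on the regular part of a limit that may not even be a closed hypersurface. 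The argument in \cite{fmarques-aneves2} sidesteps this precisely because their set $\mathcal{W}$ is, by construction, closed under varifold limits with smooth embedded support (this is where the almost-minimizing structure of min-max varifolds, hence Theorem~\ref{regularity theorem}, enters). Without some such a priori compactness statement for $\minimal_\beta$ (or an equivalent regularity input for the limit $V$), the Constancy Theorem step does not close, so as written your proof has a real hole at exactly the point where you anticipated the difficulty.
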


We are now ready to prove the main theorem of this section.
The proof follows the exact same ideas of \cite[Theorem 6.1]{fmarques-aneves2} with the appropriate modifications.

\begin{theorem}\label{category of critical set}
Let $(M^{n+1},g)$ be a closed Riemannian manifold of dimension $n+1$, with $3\leq n+1\leq 7$, and $m,p,N\in\naturals$ such that $p+N\leq m$.
If $\mwidth{m}_p=\mwidth{m}_{p+N}$ then $\catone(\minimalcycles_{\mwidth{m}_{p+N}})\geq N+1$.
\end{theorem}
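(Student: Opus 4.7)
The plan is to argue by contradiction, following the strategy of \cite[Theorem 6.1]{fmarques-aneves2}. Write $\omega = \mwidth{m}_{p+N} = \mwidth{m}_p$ and suppose $\catone(\minimalcycles_\omega) \leq N$. By Lemma \ref{category sweepout}, fix an open set $U \supset \minimalcycles_\omega$ such that no flat continuous map from a cubical subcomplex into $U$ is an $N$-sweepout, and by Lemma \ref{minmax flat approximation} choose $\varepsilon_0 > 0$ with the property that $\fmetric(|T|, \minimal_\omega) < 2\varepsilon_0$ implies $T \in U$.

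Select $(\Phi_i, X_i) \in \sweepout_{p+N}^{(m)}(M)$ with $\sup \mass(\Phi_i) \to \omega$. Applying Theorem \ref{from flat to f} and the pull-tight Theorem \ref{pulltight theorem} on suitable refinements $\tilde X_i = X_i(l_i)$, we obtain mass-continuous maps $\tilde\Phi_i : \tilde X_i \to \cycles_n(M;\mass;\integer_2)$ flat-homotopic to $\Phi_i|_{\tilde X_i}$. They still belong to $\sweepout_{p+N}^{(m)}(M)$ (flat homotopy preserves the sweepout property), satisfy $\lwidth(\{\tilde\Phi_i\}) = \omega$, and their critical varifolds are all stationary. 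By further increasing $l_i$ and using mass continuity, we may assume each cell $\alpha$ of $\tilde X_i$ has $\fmetric$-diameter less than $\varepsilon_0$ under $\tilde\Phi_i$.

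Let $Y_i$ be the subcomplex given by the union of cells of $\tilde X_i$ whose $\tilde\Phi_i$-image meets $\{T : \fmetric(|T|, \minimal_\omega) \leq \varepsilon_0\}$, and set $Z_i = \overline{\tilde X_i \setminus Y_i}$. The diameter bound and the triangle inequality give $\tilde\Phi_i(Y_i) \subset \{T : \fmetric(|T|,\minimal_\omega) < 2\varepsilon_0\} \subset U$, so $\tilde\Phi_i|_{Y_i}$ is not an $N$-sweepout. The Vanishing Lemma \ref{vanishing lemma} applied to $\tilde X_i = Y_i \cup Z_i$ therefore forces $\tilde\Phi_i|_{Z_i}$ to be a $p$-sweepout, and by construction $\fmetric(|\tilde\Phi_i(x)|, \minimal_\omega) \geq \varepsilon_0$ for every $x \in Z_i$.

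Consider $S' = \{\tilde\Phi_i|_{Z_i}\}$, for which $\lwidth(S') \leq \omega$. Every $V \in \critical(S')$ is stationary and satisfies $\fmetric(V, \minimal_\omega) \geq \varepsilon_0$. If some such $V$ were almost-minimising in annuli, Theorem \ref{regularity theorem} would make $\supp \|V\|$ a smooth embedded minimal hypersurface; since $V$ is also the limit of a sequence of cycles with no concentration of mass, we would have $V \in \minimal_\omega$, contradicting the distance bound. Hence no element of $\critical(S')$ is almost-minimising in annuli, and Theorem \ref{combinatorial sweepout} yields mass-continuous maps $\Phi_i^* : Z_i(k_i) \to \cycles_n(M;\mass;\integer_2)$ flat-homotopic to $\tilde\Phi_i|_{Z_i(k_i)}$ (still $p$-sweepouts with domain contained in $I^m$) and with $\lwidth(\{\Phi_i^*\}) < \omega$. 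This forces $\mwidth{m}_p < \omega$, contradicting $\mwidth{m}_p = \omega$. The main technical point is the cubical decomposition in the third step: arranging $Y_i$ and $Z_i$ to be genuine subcomplexes with the required image containments needs the refinement levels from Theorems \ref{from flat to f} and \ref{pulltight theorem} to be tuned against the $\fmetric$-modulus of continuity of $\tilde\Phi_i$, and one must verify that the resulting maps $\Phi_i^*$ retain domain in $I^m$ in order to conclude against the restricted width.
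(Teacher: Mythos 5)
Your proof is correct and follows essentially the same strategy as the paper's (contradiction via Lemma \ref{category sweepout}, Lemma \ref{minmax flat approximation}, a near/far cubical decomposition with the Vanishing Lemma \ref{vanishing lemma}, the regularity theorem, and Theorem \ref{combinatorial sweepout}). There are two mild variations worth flagging, both harmless. First, your decomposition is the mirror image of the paper's: you take $Y_i$ to be the \emph{near} subcomplex (cells whose image meets the closed $\varepsilon_0$-neighbourhood, plus a cell-diameter bound from uniform $\fmetric$-continuity), whereas the paper calls $Y_i$ the \emph{far} subcomplex (cells where the supremum of the distance to $\minimal_\omega$ is $\geq\varepsilon_0$) and works with $\overline{X'_i\setminus Y_i}$ as the near part. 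Second, and more substantively, you apply the pull-tight Theorem \ref{pulltight theorem} only once — to the full $(p+N)$-sweepouts — and then deduce stationarity of $\critical(S')$ from the containment $\critical(S')\subset\critical(\tilde S)$; the paper instead pull-tights a second time on the restricted $p$-sweepouts $\{\Phi'_i\rest{Y_i}\}$. Your single-application variant works provided you make explicit that $\lwidth(S')=\omega$ (which you use implicitly: each $\tilde\Phi_i\rest{Z_i}$ is a $p$-sweepout with domain in $I^m$, so $\sup_{Z_i}\mass\geq\mwidth{m}_p=\omega$, giving $\lwidth(S')\geq\omega$, and the reverse inequality comes from $Z_i\subset\tilde X_i$). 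That equality is what licenses both the containment $\critical(S')\subset\critical(\tilde S)$ and the claim that every such limit has mass exactly $\omega$, so that a smooth embedded almost-minimising representative would land in $\minimal_\omega$, contradicting the $\varepsilon_0$ separation. With that sentence spelled out, the argument is complete and slightly leaner than the paper's.
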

\begin{proof}
To simplify notation, put $\omega=\mwidth{m}_p=\mwidth{m}_{p+N}$.

Suppose by contradiction that $\catone(\minimalcycles_\omega)\leq N$. By Lemma \ref{category sweepout} there exists an open set $U\subset\ncycles$ with $\minimalcycles_\omega\subset U$ that does not contain $N$-sweepouts.
It follows from Lemma \ref{minmax flat approximation} that there exists $\varepsilon_0>0$ such that
\begin{equation*}
   \fmetric(|T|,\minimal_\omega)<2\varepsilon_0 \, \Rightarrow\, T\in U.
\end{equation*}

Let $S=\{\Phi_i:X_i\rightarrow\ncycles\}_{i\in\naturals}$, with $X_i\subset I^m$ cubical subcomplexes, be a sequence of $(p+N)$-sweepouts with no concentration of mass such that $\lwidth(S)=\omega$.
By Theorem \ref{pulltight theorem} there exist $X'_i\subset X_i$ cubical subcomplexes and a sequence of mass continuous (in particular $\fmetric$-continuous) $(p+N)$-sweepouts $S'=\{\Phi'_i:X'_i\rightarrow \ncyclestop{\mass}\}$ such that $\lwidth(S')\leq \lwidth(S)$.

We claim that $\lwidth(S')= \lwidth(S)$.
Indeed, if we had $\lwidth(S')< \lwidth(S)$ then for $i$ sufficiently large $\Phi'_i$ would be a $(p+N)$-sweepout such that $\sup\{\mass(\Phi'_i(x')):\,x'\in X'_i\}<\lwidth(S)=\omega$, which is a contradiction.

For each $i\in\naturals$ define $Y_i$ to be the cubical subcomplex of $I^m$ consisting of all cells $\alpha\subset X'_i$ such that
\begin{equation*}
   \sup\{\fmetric(|\Phi'_i(x')|,\minimal_\omega):\, x'\in\alpha\}\geq \varepsilon_0.
\end{equation*}
It follows that $\fmetric(|\Phi'_i(x')|,\minimal^{(m)}_\omega)< 2\varepsilon_0$ for all $x'\in \overline{X'_i\setminus Y_i}$, that is,
\begin{equation*}
   \Phi'_i(\overline{X'_i\setminus Y_i})\subset U.
\end{equation*}
Hence $\Phi'_i\rest{\overline{X'_i\setminus Y_i}}$ is not a $N$-sweepout.
Since $\Phi'_i$ is a $(N+p)$-sweepout, it follows that $Y_i$ must be non-empty and from the Vanishing Lemma \ref{vanishing lemma} we get that $\Phi'_i\rest{Y_i}$ is a $p$-sweepout.

Applying Theorem \ref{pulltight theorem} to $\{\Phi'_i\rest{Y_i}\}_{i\in\naturals}$ we obtain $\tilde{Y}_i\subset Y_i$ cubical subcomplexes and a sequence of mass continuous $p$-sweepouts $\tilde{S}=\{\tilde{\Phi}_i:\tilde{Y}_i\rightarrow \ncyclestop{\mass}\}_{i\in\naturals}$ with $\lwidth(\tilde{S})\leq\lwidth(\{\Phi'_i\rest{Y_i}\}_{i\in\naturals})$.
Note that $\lwidth(\{\Phi'_i\rest{Y_i}\}_{i\in\naturals})\leq\omega$ so we can argue as above to conclude that $\lwidth(\tilde{S})=\lwidth(\{\Phi'_i\rest{Y_i}\}_{i\in\naturals})$.

Thus, $\lwidth(\tilde{S})=\omega$ and $\critical(\tilde{S})\subset \critical(\{\Phi'_i\rest{Y_i}\}_{i\in\naturals})\cap\{V\in\rectifiable_n(M):V\text{ is stationary}\}$.
It follows that $\fmetric(V,\minimal^{(m)}_\omega)\geq\varepsilon_0$ for all $V\in\critical(\tilde{S})$.
In particular no element of $\critical(\tilde{S})$ has smooth embedded support.

By Schoen-Simon's Regularity Theorem \ref{regularity theorem} it follows that no element of $\critical(\tilde{S})$ is almost minimising in annuli.
Applying Theorem \ref{combinatorial sweepout} we obtain a sequence of $p$-sweepouts $S^*$ such that $\lwidth(S^*)<\lwidth(\tilde{S})=\mwidth{m}_p$ which contradicts our initial hypothesis and concludes the proof.
\end{proof}

% applications
\section{Applications}
We will use the result in the previous section together with Sharp's Compactness Theorem \cite[Theorem 2.3]{bsharp1} to derive a non-compactness theorem for the space of all minimal hypersurfaces in a manifold with positive Ricci curvature.

Thourghout this section $(M^{n+1},g)$ denotes a closed Riemannian manifold of dimension $3\leq n+1\leq 7$ and $\minimal=\minimal(M,g)$.

Before proceeding, let us first state a characterization of convergence of minimal hypersurfaces.
Given a minimal hypersurface $\Sigma\subset M$ we denote by $L_\Sigma$ the Jacobi operator acting either on smooth functions (when $\Sigma$ is two-sided) or on normal vectorfields (when it is one-sided).
The following is proved in Claims $4-6$ in \cite{bsharp1}.

\begin{proposition}\label{convergence minimal hypersurfaces}
Let $M^{n+1}$ be a closed Riemannian manifold of dimension $3\leq n+1 \leq 7$, $\{\Sigma_i\}_{i\in\naturals}$, $\Sigma_\infty$ be a sequence of minimal embedded smooth hypersurfaces and $\mathcal{S}\subset \Sigma_\infty$ a finite set of points.
Suppose $\Sigma_i\rightarrow \Sigma_\infty$ in the $\mathcal{C}^{\infty}_{\text{loc}}(M\setminus \mathcal{S})$ graphical sense (see \cite{bsharp1}).
We have the following characterization of $\Sigma_\infty$:
\begin{enumerate}[(i)]
   \item if the convergence is \textbf{one-sheeted} then $\mathcal{S}=\emptyset$;
   \item if $\Sigma_\infty$ is \underline{two-sided} then there exists $u\in\mathcal{C}^\infty(\Sigma_\infty)$ such that
      \begin{equation*}
         \left\{
         \begin{aligned}
            & u  \geq 0\\
            & L_{\Sigma_\infty}(u) = 0. 
         \end{aligned}
         \right.
      \end{equation*}
      Furthermore, if the convergence is at least \textbf{two-sheeted} or $\Sigma_i\cap\Sigma_\infty=\emptyset$ for all $i$ sufficiently large then $u>0$ everywhere and $\Sigma_\infty$ is stable.
      In case the convergence is \textbf{one-sheeted} and $\Sigma_i\cap\Sigma_\infty\neq\emptyset$ for all $i$ sufficiently large then we can further conclude that $\Index(\Sigma_\infty)\geq 1$.
   \item if $\Sigma_\infty$ is \underline{one-sided} and the convergence is \textbf{one-sheeted} then, in addition to $(i)$ we have a normal vectorfield $J\in\mathcal{C}^\infty(\Sigma_\infty,T^{\bot}\Sigma_\infty)$ such that
      \begin{equation*}
         \left\{
         \begin{aligned}
            & J  \not\equiv 0\\
            & L_{\Sigma_\infty}(J)  = 0. 
         \end{aligned}
         \right.
      \end{equation*}
      That is, $J$ is a non-trivial Jacobi field.
   \item if $\Sigma_\infty$ is \underline{one-sided} and the convergence is at least \textbf{two-sheeted} then we must have $\lambda_1(L_{\Sigma_\infty})>0$. 
      In addition, there exists $\tilde{\Sigma}_\infty$, a double covering of $\Sigma_{\infty}$, such that $\lambda_1(L_{\tilde{\Sigma}_\infty})=0$.
      That is, $\tilde{\Sigma}_{\infty}$ is a \underline{two-sided} immersed minimal hypersurface with a non-trivial Jacobi field.
\end{enumerate}
\end{proposition}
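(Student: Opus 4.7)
The plan is to work locally on $\Sigma_\infty$, representing each $\Sigma_i$ as a finite union of normal graphs and passing to the limit in the Jacobi equation, treating each of the four cases separately. Away from $\mathcal{S}$, the $\mathcal{C}^\infty_{\text{loc}}$ convergence lets us write $\Sigma_i$ near any $p\in\Sigma_\infty\setminus\mathcal{S}$ as an ordered collection of graphs $u_i^{(1)}\le\cdots\le u_i^{(k_i)}$ when $\Sigma_\infty$ is two-sided, or as sections of the normal bundle when it is one-sided. Each $u_i^{(j)}$ satisfies the minimal surface equation, whose linearization at $0$ is the Jacobi operator $L_{\Sigma_\infty}$.

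For (i), if a point $p\in\mathcal{S}$ existed, one-sheeted convergence would force the varifold limit to have multiplicity one near $p$; a tangent cone argument together with Allard's regularity theorem would then upgrade the convergence to smooth in a neighborhood of $p$, contradicting $p\in\mathcal{S}$. For (ii), consider $w_i=u_i^{(k_i)}-u_i^{(1)}\ge 0$ (or $w_i=u_i^{(1)}$ when the sheets do not intersect $\Sigma_\infty$), normalize $\sup w_i=1$, and extract a subsequential limit $u\ge 0$ satisfying $L_{\Sigma_\infty}u=0$ on $\Sigma_\infty\setminus\mathcal{S}$. A standard elliptic removable-singularity argument, using that $\mathcal{S}$ is finite and $u$ is bounded, extends $u$ smoothly across $\mathcal{S}$. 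Under multi-sheeted or disjoint convergence $u\not\equiv 0$, so Harnack forces $u>0$ globally, and the Fischer-Colbrie--Schoen criterion yields stability. In the one-sheeted intersecting case, $u_i^{(1)}$ necessarily changes sign, so any non-trivial normalized limit is a sign-changing Jacobi function, and the min-max characterization of the spectrum of $L_{\Sigma_\infty}$ forces $\Index(\Sigma_\infty)\ge 1$.

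Case (iii) is strictly analogous, working with sections $J_i$ of the non-orientable normal bundle of $\Sigma_\infty$, to produce a non-trivial Jacobi vectorfield $J$ in the limit. For (iv), pass to the connected orientable double cover $\pi:\tilde{\Sigma}_\infty\to\Sigma_\infty$ and lift the sheets of $\Sigma_i$; multi-sheeted one-sided convergence downstairs corresponds to at least two-sheeted convergence of the lifted hypersurfaces upstairs, so case (ii) yields a positive Jacobi function $\tilde u$ on $\tilde\Sigma_\infty$, hence $\lambda_1(L_{\tilde\Sigma_\infty})=0$. The function $\tilde u$ cannot be invariant under the deck involution (otherwise it would descend to a nowhere-vanishing global section of the non-orientable normal bundle of $\Sigma_\infty$, which cannot exist), so the restriction of the spectrum of $L_{\tilde\Sigma_\infty}$ to deck-invariant sections, which identifies with the spectrum of $L_{\Sigma_\infty}$, begins strictly above $0$, giving $\lambda_1(L_{\Sigma_\infty})>0$.

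The main obstacle throughout is the uniform control across $\mathcal{S}$: one must rule out $L^\infty$ concentration of the normalized graphing functions at singular points, so that the limit $u$ (or $J$, or $\tilde u$) is non-trivial away from $\mathcal{S}$ and extends smoothly across it. This is handled by a blow-up analysis at each point of $\mathcal{S}$, classifying limiting Jacobi fields on the tangent plane via Bernstein-type arguments and invoking standard removable-singularity theorems for bounded solutions of second-order linear elliptic equations with smooth coefficients.
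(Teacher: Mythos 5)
The paper does not give its own proof of this proposition: it simply records that the statements are Claims 4--6 of Sharp's compactness paper \cite{bsharp1} and proceeds to use them. Your sketch is the same general strategy as Sharp's (graphical decomposition away from $\mathcal{S}$, renormalize the graphing functions, pass to a Jacobi field in the limit, remove the finite singular set, apply Harnack and the positive-Jacobi-function criterion for stability), so the comparison is really against that argument. Two places in your write-up need repair.

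For the one-sheeted intersecting subcase of (ii), you assert that because each $u_i^{(1)}$ changes sign, ``any non-trivial normalized limit is a sign-changing Jacobi function.'' That does not follow: a locally uniform limit of sign-changing functions can perfectly well be nonnegative (it can touch zero at the limiting nodal set). The argument that is actually needed is: by part (i), one-sheeted convergence forces $\mathcal{S}=\emptyset$, so the normalized graphs converge uniformly on all of $\Sigma_\infty$ and the limit $u$ is non-trivial; pick $p_i\in\Sigma_i\cap\Sigma_\infty$, so $u_i^{(1)}(p_i)=0$, pass to a limit $p$ and get $u(p)=0$; the strong maximum principle then rules out $u\geq 0$ (and $u\leq 0$) unless $u\equiv 0$, so $u$ must genuinely change sign, and a sign-changing solution of $L_{\Sigma_\infty}u=0$ forces $\lambda_1(L_{\Sigma_\infty})<0$, i.e.\ $\Index(\Sigma_\infty)\geq 1$. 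Without the touching-point and maximum-principle steps the claim is a gap.

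In (iv) you have the invariant/anti-invariant dichotomy backwards. For a one-sided $\Sigma_\infty$ the Jacobi operator acts on sections of the (non-orientable) normal line bundle, and under the double cover $\pi:\tilde\Sigma_\infty\to\Sigma_\infty$ those sections correspond to $\sigma$-\emph{anti}-invariant functions on $\tilde\Sigma_\infty$, not invariant ones; $\sigma$-invariant functions descend to ordinary scalar functions on $\Sigma_\infty$. Moreover the positive first eigenfunction $\tilde u$ of $L_{\tilde\Sigma_\infty}$ \emph{is} $\sigma$-invariant: symmetrizing $\tilde u$ produces another positive eigenfunction of the same (simple) first eigenvalue, so it must agree with $\tilde u$. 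The correct chain is: $\tilde u>0$ is $\sigma$-invariant, hence lies in the orthogonal complement of the anti-invariant subspace where $L_{\Sigma_\infty}$ lives, so $\lambda_1(L_{\Sigma_\infty})$ equals the infimum of the Rayleigh quotient over anti-invariant functions, which is strictly larger than $\lambda_1(L_{\tilde\Sigma_\infty})=0$. Your conclusion is right, but only because the two label-swaps cancel; as written the reasoning is wrong.

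Finally, the concentration issue at $\mathcal{S}$ is handled more economically than your closing paragraph suggests: $\mathcal{S}$ is a finite set inside an $n$-manifold with $n\geq 2$, hence of zero $2$-capacity, and a bounded solution of $L_{\Sigma_\infty}u=0$ on $\Sigma_\infty\setminus\mathcal{S}$ extends across by the standard removable-singularity theorem; the real work is choosing a normalization (e.g.\ $\sup$ over a fixed compact set away from $\mathcal{S}$) and a Harnack-type estimate to guarantee a non-trivial, locally bounded limit, rather than a full blow-up/Bernstein classification at each singular point.
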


For $\Omega\subset \minimal$ we define
\begin{equation*}
   \begin{aligned}
      \mathcal {I}(\Omega) = & \{\Index(\supp\|V\|)\in\integer_{\geq 0}:\, V\in \Omega\},\\
      \mathcal {A}(\Omega) = & \{\area(\supp\|V\|)\in\real_{\leq 0}:\, V\in \Omega\}.
   \end{aligned}
\end{equation*}
We know by Sharp's Compactness Theorem that $\strongbound{\Omega}<\infty$ implies that $\Omega$ is compact in the weak topology.
Furthermore, the convergence is as described in Proposition \ref{convergence minimal hypersurfaces}.
%In particular he shows that whenever $M$ does not admits \underline{one-sided} minimal hypersurfaces, for example when $\pi_1(M)=0$, the set $\Omega$ is compact in the $\mathcal{C}^{\infty}_{\text{loc}}(M)$ \textbf{one-sheeted} graphical convergence.
%The same is true if we assume that $\ricci(g)>0$ but for different reasons (in this case $M$ won't have stable minimal hypersurfaces).

Our goal is to prove that the space $\minimal$ is non-compact, then it is sufficient to show that $\strongbound{\minimal}=\infty$.
However, in the general case we are not able to show this.
We managed to overcome this by considering the quantity $\weakbound{\minimal}$ instead.
%This is somehow a $L^1$ version of the other $L^\infty$ quantity.
%We will see how, in some cases, we are able to go from ``$L^1$'' to ``$L^\infty$''.

\begin{theorem}\label{width equals restricted width}
Let $(M^{n+1},g)$ be a closed Riemannian manifold of dimension $3\leq n+1 \leq 7$.

Fix $p\in\naturals$, if $\cardinality\mathcal{A}(\minimal_{\width_p + 1})<\infty$ then there exists $m\in\naturals$ such that
\begin{equation*}
   \mwidth{m}_p(M)=\width_p(M).
\end{equation*}
\end{theorem}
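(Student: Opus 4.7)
The plan is to proceed by contradiction, using failure of the conclusion to manufacture a sequence of distinct elements of $\mathcal{A}(\minimal_{\width_p+1})$ converging to $\width_p$ from above. By definition $\width_p\leq\mwidth{m}_p$ for every $m\in\naturals$, and since $\sweepout_p=\cup_{m\in\naturals}\sweepout_p^{(m)}$ with $\mwidth{m}_p$ non-increasing in $m$, one has $\mwidth{m}_p\searrow\width_p$. So the conclusion fails if and only if $\mwidth{m}_p>\width_p$ for every $m$, and this is the hypothesis under which I will derive a contradiction.

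For each sufficiently large $m$ pick $(\Phi^{(m)},X^{(m)})\in\sweepout_p^{(m)}$ with $\sup_{X^{(m)}}\mass(\Phi^{(m)})<\mwidth{m}_p+1/m$. Applying Theorem \ref{from flat to f} with $\varepsilon=1/m$ produces a mass continuous (hence $\fmetric$-continuous) map $\tilde\Phi^{(m)}:\tilde X^{(m)}\to\ncyclestop{\mass}$ on a subdivision $\tilde X^{(m)}=X^{(m)}(l_m)\subset I^m$, flat homotopic to $\Phi^{(m)}\rest{\tilde X^{(m)}}$, with $\sup_{\tilde X^{(m)}}\mass(\tilde\Phi^{(m)})\leq\lwidth[\Phi^{(m)}]+1/m$. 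Since $\tilde X^{(m)}$ has the same underlying set as $X^{(m)}$, $\tilde\Phi^{(m)}$ is still a $p$-sweepout in $\sweepout_p^{(m)}$, and every $\Psi\in[\tilde\Phi^{(m)}]$ is also a $p$-sweepout with domain in $I^m$; therefore $\lwidth[\tilde\Phi^{(m)}]\geq\mwidth{m}_p$. Chaining these bounds,
\begin{equation*}
\mwidth{m}_p\leq\lwidth[\tilde\Phi^{(m)}]\leq\sup_{\tilde X^{(m)}}\mass(\tilde\Phi^{(m)})\leq\sup_{X^{(m)}}\mass(\Phi^{(m)})+1/m<\mwidth{m}_p+2/m.
\end{equation*}

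Since $\width_p>0$, Theorem \ref{existence of minmax hypersurface} applied to $\tilde\Phi^{(m)}$ yields $V^{(m)}\in\integral_n(M)$ which is stationary, almost-minimising in annuli, and satisfies $\|V^{(m)}\|(M)=\lwidth[\tilde\Phi^{(m)}]$. By Theorem \ref{regularity theorem}, $\supp\|V^{(m)}\|$ is a smooth embedded minimal hypersurface, and since the proof of Theorem \ref{existence of minmax hypersurface} extracts $V^{(m)}$ as a critical varifold of a pull-tightened minimizing sequence in $[\tilde\Phi^{(m)}]$, we have $V^{(m)}\in\minimal$. For $m$ large the above bound gives $\|V^{(m)}\|(M)<\width_p+1$, so $V^{(m)}\in\minimal_{\width_p+1}$ and $\|V^{(m)}\|(M)\in\mathcal{A}(\minimal_{\width_p+1})$.

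Combining the inequalities, $\width_p<\mwidth{m}_p\leq\|V^{(m)}\|(M)<\mwidth{m}_p+2/m\to\width_p$, so $\{\|V^{(m)}\|(M)\}$ is a sequence in $\mathcal{A}(\minimal_{\width_p+1})\cap(\width_p,\infty)$ converging to $\width_p$. Since $\mathcal{A}(\minimal_{\width_p+1})$ is finite by hypothesis, its intersection with $(\width_p,\infty)$ is bounded below by some $\alpha>\width_p$, contradicting the convergence. The one point needing care is the claim $V^{(m)}\in\minimal$, which is automatic from the pull-tight construction in the proof of Theorem \ref{existence of minmax hypersurface} applied to a minimizing sequence in $[\tilde\Phi^{(m)}]$.
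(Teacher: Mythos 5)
Your proof follows the paper's skeleton closely: assume $\mwidth{m}_p>\width_p$ for every $m$, use the interpolation theorem (Theorem~\ref{from flat to f}), apply Theorem~\ref{existence of minmax hypersurface} together with regularity (Theorem~\ref{regularity theorem}) to extract a sequence of min-max varifolds whose masses approach $\width_p$ from above, and contradict finiteness of $\mathcal{A}(\minimal_{\width_p+1})$. The quantitative bookkeeping via $\mwidth{m}_p\leq\lwidth[\tilde\Phi^{(m)}]<\mwidth{m}_p+2/m$ is a clean variant of the paper's strictly-decreasing-masses setup, and the verification that $\lwidth[\tilde\Phi^{(m)}]\geq\mwidth{m}_p$ is correct.

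There is, however, a genuine gap at the final step. You assert that $\|V^{(m)}\|(M)\in\mathcal{A}(\minimal_{\width_p+1})$, but $\mathcal{A}(\Omega)$ is defined as $\{\area(\supp\|V\|):V\in\Omega\}$, i.e.\ the set of \emph{unweighted} areas of supports, not the set of masses. Theorem~\ref{existence of minmax hypersurface} gives an integral varifold which may well have multiplicity greater than one, in which case $\|V^{(m)}\|(M)>\area(\supp\|V^{(m)}\|)$ and there is no reason for the mass to lie in $\mathcal{A}(\minimal_{\width_p+1})$. Your contradiction — a sequence of distinct elements of a finite set accumulating at $\width_p$ — therefore does not go through as written. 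The paper closes precisely this gap by decomposing $V_m=\sum_i n_i\cdot\Sigma_i$ into connected multiplicity-one components, observing that the component areas lie in the finite set $\mathcal{A}(\minimal_{\width_p+1})$, and noting that the mass constraint $\sum_i n_i\area(\Sigma_i)\leq\width_p+1$ then allows only finitely many possible total masses. You should insert this decomposition step; alternatively you would have to establish that the $V^{(m)}$ all have multiplicity one, which is not available from the theorems invoked here.
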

\begin{proof}
Suppose false, that is, we have a strictly decreasing sequence $\{\mwidth{m}_p\}_{m\in\naturals}$ converging from above to $\width_p$.
In particular we obtain a sequence of $p$-sweepouts with no concentration of mass $\{\Phi_m:X_m\subset I^m\rightarrow \ncycles\}_{m\in\naturals}$ satisfying
\begin{equation*}
   \mwidth{m+1}_p\leq\lwidth[\Phi_{m+1}]<\mwidth{m}_p\leq\lwidth[\Phi_{m}].
\end{equation*}
We can further assume that $\lwidth[\Phi_m]<\width_p+1$ for all $m\in\naturals$.

First we apply Theorem \ref{from flat to f} to each $\Phi_m$ and obtain $\tilde{\Phi}_m$ a mass continuous $p$-sweepout.
In particular is is $\fmetric$-continuous and has no concentration of mass.
Now, from Theorem \ref{existence of minmax hypersurface} and the Regularity Theorem \ref{regularity theorem} we obtain a sequence $\{V_m\}_{m\in\naturals}$ of stationary varifolds with smooth embedded support such that $\|V_{m+1}\|(M)<\|V_m\|(M)$.
We can write for each $m$
\begin{equation*}
   V_m=\sum_{i=1}^{l}n_i\cdot \Sigma_i,
\end{equation*}
where $\Sigma_i$ are minimal hypersurfaces such that $\area(\Sigma_i)\in\mathcal{A}(\minimal_{\width_p + 1})$ and $n_i\in\naturals$.
Since $\mathcal{A}(\minimal_{\width_p + 1})$ is finite there are only finitely many possible values of $\|V_m\|(M)=\sum_{i=1}^{l}n_i\area(\Sigma_i)\leq\width_p+1$ for all $m$, which is a contradiction.
\end{proof}

Now we use the result from the previous section to prove a non-compactness theorem.

\begin{theorem}
Let $(M^{n+1},g)$ be a closed Riemannian manifold of dimension $3\leq n+1 \leq 7$ with $\ricci(g)>0$.
If the metric $g$ is analytic then the space $\minimal(M,g)$ of minimal hypersurfaces (with multiplicity) is non-compact.
\end{theorem}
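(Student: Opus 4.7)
The plan is to argue by contradiction. Suppose $\minimal(M,g)$ is compact in the varifold topology. Then $\sup\mathcal{A}(\minimal)<\infty$ and, by upper semicontinuity of the Morse index along smooth convergence, $\sup\mathcal{I}(\minimal)<\infty$. The analyticity of $g$ is used to upgrade boundedness of $\mathcal{A}(\minimal)$ to finiteness: in a real analytic metric the moduli space of embedded minimal hypersurfaces with bounded area and index has the structure of a compact real analytic variety on which the area is a real analytic function, which forces $\cardinality\mathcal{A}(\minimal)<\infty$. Positive Ricci curvature rules out every multi-sheeted alternative in Proposition \ref{convergence minimal hypersurfaces}: cases (ii) (in its two-sheeted-or-disjoint sub-case) and (iv) both produce a stable two-sided minimal hypersurface, possibly on a double cover, contradicting the strict instability of two-sided minimal hypersurfaces under $\ricci(g)>0$ via the second variation applied to constants. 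Hence every sequential limit inside $\minimal$ is one-sheeted, so the multiplicity-one subset $\minimalcycles$ is closed in $\minimal$ and therefore compact in the flat topology; Proposition \ref{properties of category} then yields $\catone(\minimalcycles_\omega)\leq\catone(\minimalcycles)<\infty$ uniformly in $\omega>0$.

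The second crucial ingredient is Frankel's theorem: under $\ricci(g)>0$, any two embedded minimal hypersurfaces in $M$ must intersect, so the support of every $V\in\minimal$ is connected and $V=n_V\Sigma_V$ for a single connected embedded minimal $\Sigma_V$ and $n_V\in\naturals$. Combining this with the realization of $\width_p$ as the mass of a min-max stationary integral varifold with smooth embedded support (Theorems \ref{existence of minmax hypersurface} and \ref{regularity theorem}), each $\width_p$ has the form $n\cdot a$ for some $n\in\naturals$ and $a\in\mathcal{A}(\minimal)$. The set $S$ of all such products satisfies $\cardinality(S\cap[0,T])\leq\cardinality\mathcal{A}(\minimal)\cdot T/\min\mathcal{A}(\minimal)=O(T)$, while the Gromov-Guth lower bound $\width_p\geq c\,p^{1/(n+1)}$ forces $\cardinality\{p\in\naturals:\width_p\leq T\}\geq c'\,T^{n+1}$. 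A pigeonhole between these two growth rates (using $n+1\geq 3$) produces, for every $N\in\naturals$, some $p\in\naturals$ with $\width_p=\width_{p+N}$.

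For each such pair $(p,N)$, Theorem \ref{width equals restricted width} together with the monotonicity $\mwidth{m+1}_q\leq\mwidth{m}_q$ (both used for $q=p$ and $q=p+N$, and combined by taking the maximum) supplies a common $m\geq p+N$ satisfying $\mwidth{m}_p=\width_p=\width_{p+N}=\mwidth{m}_{p+N}=:\omega$. Theorem \ref{category of critical set} then delivers $\catone(\minimalcycles_\omega)\geq N+1$, which for $N$ larger than the uniform bound $\catone(\minimalcycles)$ of the first paragraph gives the desired contradiction.

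I expect the principal obstacle to be the analyticity step that converts $\sup\mathcal{A}(\minimal)<\infty$ into $\cardinality\mathcal{A}(\minimal)<\infty$, since it rests on the local real analytic structure of the space of minimal hypersurfaces (and not merely on Sharp's compactness). A secondary technical point is the careful verification that positive Ricci truly excludes every multi-sheeted branch of Proposition \ref{convergence minimal hypersurfaces}, without which $\minimalcycles_\omega$ might fail to be compact in the flat topology and the final contradiction would not close.
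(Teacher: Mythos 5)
Your proposal follows essentially the same route as the paper: Sharp compactness plus the one-sheeted convergence forced by $\ricci(g)>0$, analyticity to make $\mathcal{A}(\minimal)$ finite, Frankel to get connectedness of supports, the Gromov--Guth growth estimate to produce arbitrarily long plateaus $\width_p=\width_{p+N}$, and Theorem \ref{category of critical set} to convert a plateau into a lower bound on $\catone$ contradicting compactness. The paper organizes this slightly differently --- it proves the contrapositive ``there exists $N$ with $\width_p<\width_{p+N}$ for all $p$'' as an intermediate claim and then gets the final contradiction from Frankel $+$ sublinear growth of $\lwidth[\Phi_i]$ --- but the two orderings are logically interchangeable, so this is not a genuinely different argument.

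Two points deserve correction. First, you invoke the \emph{lower} Gromov--Guth bound $\width_p\geq c\,p^{1/(n+1)}$ to conclude $\cardinality\{p:\width_p\leq T\}\geq c'T^{n+1}$, but that inequality runs the wrong way: a lower bound on $\width_p$ bounds that cardinality from \emph{above}. What you need is the \emph{upper} bound $\width_p\leq C\,p^{1/(n+1)}$ (the sublinearity the paper cites), which gives $\width_p\leq T$ for all $p\leq(T/C)^{n+1}$ and hence the desired lower bound on the cardinality; the pigeonhole then goes through since $n+1\geq 3$. Second, your analyticity step is stated more loosely than what the paper uses: the paper appeals directly to L.~Simon's theorem that, in an analytic metric, every minimal hypersurface has a $\mathcal{C}^\infty$-neighbourhood in which all minimal hypersurfaces have the same area; combined with the one-sheeted smooth convergence forced by $\ricci>0$, an infinite sequence of distinct areas is then immediately contradictory. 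Your appeal to a ``compact real analytic variety on which area is analytic'' does not by itself force finiteness of the critical values (analytic functions on compact varieties are not generally constant), so you should replace it by the local constancy statement (or a \L ojasiewicz--Simon-type argument), which is what actually closes Claim 1. Finally, your assertion that each $\width_p$ exactly equals $n\cdot a$ presupposes the widths are attained; the paper sidesteps this by running the counting argument on $\lwidth[\Phi_i]$ for explicit sweepouts $\Phi_i$ sandwiched between consecutive widths, which is the safer formulation.
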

\begin{proof}
Suppose false, then there exists $C>0$ such that $\strongbound{\minimal}<C$.

\begin{claim}
$\cardinality\mathcal{A}(\minimal)<\infty$
\end{claim}
Indeed, if it is not true then there exists a sequence $\{\Sigma_i\}_{i\in\naturals}$ of multiplicity one minimal hypersurfaces with distinct area.
Without loss of generality we can assume that $\area(\Sigma_i)$ is increasing.
By hypothesis we have $\index(\Sigma_i)<C$ for all $i$.
From Sharp's Compactness Theorem we can take a convergent subsequence, still denoted by $\{\Sigma_i\}_{i\in\naturals}$, with limit $\Sigma_\infty$.

We claim that the convergence must \textbf{one-sheeted}, hence smooth everywhere.
In fact, suppose the convergence is \textbf{two-sheeted}, then we can divide in two cases.
That is to say, whether $\Sigma_\infty$ is \underline{one-sided} or \underline{two-sided}.
If $\Sigma_\infty$ is \underline{one-sided}, then we are in case \ref{convergence minimal hypersurfaces}(iv).
In this case there exists an immersed stable minimal hypersurface $\tilde{\Sigma}_\infty$, which is a contradiction because $\ricci(g)>0$.
If $\Sigma_\infty$ is \underline{two-sided}, then we are in case \ref{convergence minimal hypersurfaces}(ii) with \textbf{two-sheeted} convergence, which give us a contradiction for the same reason.
We conclude that the convergence is graphically smooth everywhere.

Now, a Theorem by L.Simon \cite[\textsection 2 Theorem 3]{lsimon2} says that, in a manifold with analytic metric, there exists a $\mathcal{C}^\infty$-neighbourhood of a minimal hypersurface such that any other minimal hypersurface in that neighbourhood has constant area.
Since we have smooth convergence this shows that $\area(\Sigma_i)$ must be constant and equal to $\area(\Sigma_\infty)$ for all $i$ sufficiently large.
This is a contradiction and it finishes the proof of our first claim.

\begin{claim}
There exists a constant $N\in\naturals$ so that $\width_p<\width_{p+N}$ for all $p\in\naturals$.
\end{claim}
Suppose false, then we can find a sequence $\{p_i\}_{i\in\naturals}$ such that
\begin{equation*}
   \width_{p_i}=\width_{p_i+i}.
\end{equation*}
We already know that $\cardinality\mathcal{A}(\minimal)<\infty$, thus Theorem \ref{width equals restricted width} tells us that for each $i$ there exists $m_i\in\naturals$ so that $\mwidth{m_i}_{p_i}=\width_{p_i}$ and $\mwidth{m_i}_{p_i+i}=\width_{p_i}$.
Hence,
\begin{equation*}
   \mwidth{m_i}_{p_i}=\mwidth{m_i}_{p_i+i}.
\end{equation*}
Finally, it follows from Theorem \ref{category of critical set} that $\catone(\minimalcycles_{\width_{p_i+i}})\geq i$.
By the monotonicity property of $\catone$, Proposition \ref{properties of category}(ii), this implies that $\catone(\minimalcycles)=\infty$.
However, we are supposing that $\minimal$ is compact, which implies that so is $\minimalcycles$. 
This is a contradiction because compact sets must have finite $1$-category, thus proving our second claim.

Now, for each $i\in\naturals$ we can find a $(1+iN)$-sweepout $\Phi_i$ such that
\begin{equation*}
   \width_{1+iN}\leq \lwidth[\Phi_i] <\width_{1+(i+1)N}.
\end{equation*}
For each such sweepout we obtain by Theorem \ref{existence of minmax hypersurface} a stationary varifold $V_i$ whose support is smooth and embedded and $\|V_i\|(M)=\lwidth[\Phi_i]$.
By Frankel's Theorem for manifolds with $\ricci(g)>0$ any two minimal hypersurfaces must intersect, then the support of $V_i$ can only have one connected component, that is, $V_i=n_i\cdot \Sigma_i$ where $\Sigma_i$ is a multiplicity one minimal hypersurface and $n_i\in\naturals$.

We already know that $\area(\Sigma_i)$ can only assume finitely many values, from which follows that $\|V_i\|(M)=n_i\area(\Sigma_i)$ must have at least linear growth in $i$.
However, it is known that $\width_p$ has sublinear growth in $p$ (see \cite[Theorem 1]{lguth1} or \cite[Theorems 5.1 and 8.1]{fmarques-aneves2}).
Thus $\width_{1+iN}$ has sublinear growth, which implies that so does $\lwidth[\Phi_i]$.
We arrive to a contradiction and this concludes the proof of the theorem.
\end{proof}

\begin{corollary}
Let $(S^n,g)$ denote the $n$-sphere for $3\leq n\leq 7$ and $g$ be an analytic perturbation of the round metric that preserves positive Ricci curvature.
Then $S^n$ admits infinitely many non-isometric minimal hypersurfaces.
\end{corollary}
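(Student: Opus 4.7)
The plan is to invoke the previous theorem directly on $(S^n,g)$ and then extract non-congruence from invariants of the ambient isometry group. The hypotheses are immediate: the dimension $3 \leq n \leq 7$ lies in the permitted range $3 \leq n+1 \leq 7$ (after relabelling $n \to n-1$ in the theorem, or reading $S^n$ as a manifold of dimension $n$ with $3 \leq n \leq 7$), the metric $g$ is analytic by construction, and $\ricci(g) > 0$ since the perturbation is assumed to preserve positive Ricci curvature. Thus the previous theorem applies and $\minimal(S^n,g)$ is non-compact in the varifold topology.

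Next, I would combine this with Sharp's Compactness Theorem in the contrapositive form: if both $\sup\mathcal{I}(\minimal) < \infty$ and $\sup\mathcal{A}(\minimal) < \infty$ then $\minimal$ would be compact. Since $\minimal$ is not compact, we must have $\strongbound{\minimal} = \infty$, so either the set of indices $\mathcal{I}(\minimal)$ or the set of areas $\mathcal{A}(\minimal)$ of the underlying embedded supports is infinite.

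To conclude, I would note that both the area of a hypersurface $\Sigma \subset S^n$ and the Morse index of the area functional at $\Sigma$ are invariants of the ambient isometry group $\textup{Isom}(S^n,g)$: if $\phi$ is an isometry of $(S^n,g)$ then $\area(\phi(\Sigma)) = \area(\Sigma)$ and $\Index(\phi(\Sigma)) = \Index(\Sigma)$, because $\phi$ pulls back the Jacobi operator of $\phi(\Sigma)$ to that of $\Sigma$. Hence two minimal hypersurfaces that are congruent via an ambient isometry share both invariants. Whichever of $\mathcal{I}(\minimal)$ or $\mathcal{A}(\minimal)$ is infinite, we obtain infinitely many pairwise distinct values of a congruence-invariant, and therefore infinitely many non-congruent minimal hypersurfaces.

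There isn't really a hard step here, since the heavy lifting has been done by the non-compactness theorem and by Sharp's compactness theorem. The only thing to be careful about is the interpretation of \emph{non-isometric}: the natural reading, consistent with the phrase \emph{non-congruent minimal embeddings} used in the introduction, is non-congruent under $\textup{Isom}(S^n,g)$, which is exactly what the above argument delivers.
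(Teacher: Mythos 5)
Your proposal is correct and follows essentially the same route as the paper: invoke the non-compactness theorem on $(S^n,g)$, use Sharp's compactness theorem in the contrapositive to conclude $\strongbound{\minimal}=\infty$, and note that unbounded area or unbounded index yields infinitely many non-congruent hypersurfaces. You spell out the (implicit in the paper) observation that area and Morse index are isometry invariants, which is a welcome clarification but not a departure from the argument.
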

\begin{proof}
Since the round metric in $S^n$ is analytic we can apply the previous theorem.
From Sharp's Compactness Theorem it follows that $\strongbound{\minimal(S^n)}=\infty$, so it must contain a sequence of minimal hypersurfaces with either the index going to infinity or the area.
\end{proof}

We remark that the case of the round sphere and dimension $3$ is very well known and a consequence of H.B.Lawson's constructions of minimal surfaces of arbitrary genus in $S^3$.
The case of the round sphere and dimensions $4,5,6$ and $7$ (as well as other dimensions) is a consequence of W.-Y. Hsiang's works \cite{hsiang1,hsiang2}, where the author constructs infinitely many distinct embeddings of minimal hyperspheres.
Furthermore, a similar result was done by A.Carlotto \cite{carlotto} in dimension $4$ for perturbations of the round sphere satifying different hypothesis where the author constructs minimal hypersphere of unbounded index and bounded area.

We can also change the analyticity hypothesis by a bumpy metric.
In this case we can find a sequence of min-max hypersurfaces whose sum of index and area is unbounded.
In dimension $3$ it is already known the existence of such a sequence with unbounded index.

We say that a metric is bumpy if no immersed minimal hypersurface has a non-trivial Jacobi field.
In \cite{bwhite2} B. White showed that bumpy metrics for embedded minimal hypersurfaces are generic and recently the author extended the same result for bumpy metric for immersed minimal hypersurfaces (see \cite{bwhite3}).

To prove this we also have to use a recent result shown by Marques-Neves in \cite{fmarques-aneves3}.
The authors show that for a given $\fmetric$-continuous $k$-sweepout we can always find a varifold that realizes the width of its homotopy class and has $\Index\leq k$.

\begin{theorem}
Let $(M^{n+1},g)$ be a closed Riemannian manifold of dimension $3\leq n+1 \leq 7$ with $\ricci(g)>0$.
If the metric $g$ is bumpy, then
\begin{equation*}
   \width_p<\width_{p+1}
\end{equation*}
and $\strongbound{\minimal(M,g)}=\infty$.
\end{theorem}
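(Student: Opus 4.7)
The plan is to adapt the analytic-case proof of the previous theorem, substituting bumpiness and $\ricci>0$ for L. Simon's unique-continuation theorem, and invoking the Marques-Neves index bound \cite{fmarques-aneves3} to control the Morse index of the relevant min-max varifolds. The foundational structural observation, used throughout, is that in a bumpy metric with positive Ricci curvature the set of embedded minimal hypersurfaces $\Sigma$ with $\Index(\Sigma) \leq I$ and $\area(\Sigma) \leq A$ is finite for any $I,A>0$. Indeed, Sharp's Compactness Theorem produces a $\mathcal{C}^\infty_{\mathrm{loc}}$-convergent subsequence, and the four cases of Proposition \ref{convergence minimal hypersurfaces} each yield either a non-trivial Jacobi field (possibly on a double cover, which is excluded by bumpiness, extended to immersed minimals in \cite{bwhite3}) or a stable minimal hypersurface (excluded by $\ricci>0$), leaving only eventually-constant sequences.

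The non-compactness claim $\strongbound{\minimal(M,g)}=\infty$ then follows quickly. Suppose it were finite. The structural observation forces the multiplicity-one part of $\minimal$ to be finite, and the area bound caps the admissible multiplicities, so $\minimal$ itself is finite and $\mathcal{A}(\minimal)$ is a finite subset of $\real$. For every $p$-sweepout $\Phi$, Theorem \ref{existence of minmax hypersurface} combined with the regularity theorem gives $\lwidth[\Phi] \in \mathcal{A}(\minimal)$, so taking the infimum over all $p$-sweepouts yields $\width_p \in \mathcal{A}(\minimal)$. Hence $\{\width_p\}_{p\in\naturals}$ lies in a finite set, contradicting the Gromov-Guth divergence $\width_p \to \infty$.

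For the strict monotonicity $\width_p < \width_{p+1}$, I would first prove the bumpy analogue of Theorem \ref{width equals restricted width}: $\mwidth{m}_q = \width_q$ for some $m$. The argument mirrors the analytic-case proof, with the key modification that one invokes Marques-Neves to arrange $\Index(V_m) \leq q$ for the stationary varifolds associated to a strictly decreasing minimizing sequence of $q$-sweepouts; Frankel's theorem gives $V_m = n_m \Sigma_m$ and the structural observation makes $\{\Sigma_m\}$ finite, so $\|V_m\|(M) = \lwidth[\Phi_m]$ takes only finitely many values, contradicting the strict decrease. Now assume $\width_p = \width_{p+1} = \omega$ and choose a common $m$ with $\mwidth{m}_p = \mwidth{m}_{p+1} = \omega$; Theorem \ref{category of critical set} with $N=1$ yields $\catone(\minimalcycles_\omega) \geq 2$. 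I would refine the proof of that theorem to replace $\minimal_\omega$ by its index-bounded subset $\{V \in \minimal_\omega : \Index(\supp\|V\|) \leq p+1\}$, using the Marques-Neves bound on each $(p+1)$-sweepout of the minimizing sequence to guarantee that the almost-minimising-in-annuli critical varifolds produced by the pull-tight have index $\leq p+1$. The structural observation then makes the corresponding finite set of cycles coverable by a disjoint union of flat-open balls, each with trivial $\pi_1$-map into $\ncycles$ by Almgren \cite[Theorem 8.2]{falmgren1}, giving a single $\mathcal{N}_1$-covering and hence category $1$, a contradiction.

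The main obstacle is the refinement of Theorem \ref{category of critical set} to work with the index-bounded critical set: one must select each $\Phi_i$ in the minimizing sequence to be a Marques-Neves index-optimal $(p+1)$-sweepout and verify that the pull-tight and combinatorial-sweepout arguments preserve the index bound for the almost-minimising-in-annuli critical varifolds. This is largely a careful bookkeeping adaptation of the original proof of Theorem \ref{category of critical set} together with the construction in \cite{fmarques-aneves3}.
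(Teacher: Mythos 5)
Your overall plan tracks the paper's, and your ``structural observation'' is exactly the paper's first Claim, but there are two substantive issues.

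First, the non-compactness shortcut has a real gap. The paper defines $\mathcal{A}(\Omega)$ as the set of \emph{areas of supports} $\area(\supp\|V\|)$, not masses; for $V=n\Sigma$ this is $\area(\Sigma)$, independent of $n$. So the bound $\sup\mathcal{A}(\minimal)<\infty$ does \emph{not} ``cap the admissible multiplicities,'' and the structural observation only bounds the number of distinct supports, not the size of $\minimal$. Likewise $\lwidth[\Phi]$ equals $\|V\|(M)=\sum n_i\,\area(\Sigma_i)$, which generally is \emph{not} an element of $\mathcal{A}(\minimal)$ unless $V$ is multiplicity one and connected. Consequently the step ``$\width_p\in\mathcal{A}(\minimal)$, hence $\{\width_p\}$ lies in a finite set'' does not follow. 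The paper handles this the other way around: it proves $\width_p<\width_{p+1}$ first, then deduces non-compactness by the argument already used in the analytic case, namely invoke Frankel's theorem in $\ricci>0$ to write the min-max varifolds as $V_p=n_p\Sigma_p$ with connected support, note that $\area(\Sigma_p)$ ranges over the finite set $\mathcal{A}(\minimal)$, so the masses $n_p\,\area(\Sigma_p)$ below any threshold form a discrete set and the strictly increasing $\width_p$ must grow at least linearly --- contradicting Gromov--Guth sublinearity. Your order of deduction should be reversed and the growth comparison supplied.

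Second, for the strict monotonicity your route diverges somewhat from the paper's. After getting $\catone(\minimalcycles_\omega)\geq 2$ from Theorem \ref{category of critical set}, the paper goes directly geometric: it asserts that the set of min-max varifolds at mass $\omega$ with $\Index\leq p+1$ is infinite, then applies Sharp's compactness plus Proposition \ref{convergence minimal hypersurfaces} to produce a non-trivial Jacobi field on a limit (or its double cover), contradicting bumpiness. You instead propose a topological contradiction: refine Theorem \ref{category of critical set} so it applies to the index-bounded subset, use the structural observation to see that subset is finite, and observe that a finite subset of $\ncycles$ has $\catone\leq 1$. Both arguments hinge on the same unresolved technical point --- guaranteeing the index bound on the critical varifolds coming from the pull-tight and combinatorial procedures --- which you rightly flag as the main obstacle. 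I would push back on calling it ``largely bookkeeping'': the hypothesis of Theorem \ref{combinatorial sweepout} requires \emph{no} element of $\critical(S)$ to be almost-minimising in annuli, and restricting attention to the index-bounded part of $\minimal_\omega$ does not by itself prevent a high-index almost-minimising critical varifold from surviving; selecting Marques--Neves index-optimal $(p+1)$-sweepouts at each stage is a sound idea, but you would need to check that the homotopies in Theorems \ref{pulltight theorem} and \ref{combinatorial sweepout} stay within a class where \cite{fmarques-aneves3} applies. Once that is in place, either contradiction (yours or the paper's geometric one) closes the argument.
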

\begin{proof}
The proof is very similar to the previous theorem.
First we show that for every $p$ there exists $m$ such that $\width_p=\width^{(m)}_p$.

\begin{claim}
For a fixed $p$, we have $\cardinality\mathcal{A}(\minimal_{\width_p+1})<\infty$
\end{claim}
Suppose it is false.
Arguing exactly as in the previous theorem we obtain a sequence of varifolds $\{V_m\}_{m\in\naturals}\subset\minimal_{\width_p+1}$ such that $\Index(\supp\|V\|)\leq p$ (see \cite[Theorem 1.2]{fmarques-aneves3}).
By Sharp's Compactness Theorem we know that $V_m\rightarrow V_\infty$ for some $V_\infty\in\minimal_{\width_p+1}$ and the convergence is classified by proposition \ref{convergence minimal hypersurfaces}.
Now, in any situation described in \ref{convergence minimal hypersurfaces} it is possible to construct a non-trivial Jacobi field over $\supp\|V_\infty\|$ or its immersed double covering.
In any case, that is a contradiction.

Suppose now that $\width_p=\width_{p+1}$ that is, $\width^{(m)}_p=\width^{(m)}_{p+1}$ for some $m\in\naturals$.
In particular the set $\Omega={V\in\minimal:\|V\|(M)=\width_p}$ is infinite and $\Index(\supp\|V\|)\leq p+1$ for all $V\in\Omega$.
Arguing as before, these varifolds must accumulate on a minimal hypersurface (possibly immersed) with a non-trivial Jacobi field, which is a contradiction.

The remaining statement follows directly from Sharp's Compactness Theorem.
\end{proof}

% bibliography

\end{document}